\newcommand\reallywidehat[1]{%
	\savestack{\tmpbox}{\stretchto{%
			\scaleto{%
				 \scalerel*[\widthof{\ensuremath{#1}}]{\kern-.6pt\bigwedge\kern-.6pt}%
				{\rule[-\textheight/2]{1ex}{\textheight}}
			}{\textheight}%
		}{0.5ex}}%
	\stackon[1pt]{#1}{\tmpbox}%
}
\numberwithin{equation}{section}
\newtheorem{theorem}{Theorem}
\numberwithin{theorem}{section}
\newtheorem{lemma}[theorem]{Lemma}
\newtheorem{proposition}[theorem]{Proposition}
\newtheorem{corollary}[theorem]{Corollary}
\theoremstyle{remark}
\newtheorem{remark}{Remark}
\theoremstyle{definition}
\newtheorem*{definition}{Definition}
\newcommand{\Z}{\mathbb{Z}}
\newcommand{\N}{\mathbb{N}}
\newcommand{\Q}{\mathbb{Q}}
\newcommand{\C}{\mathbb{C}}
\newcommand{\im}{\operatorname{Im}}
\renewcommand{\Im}{\operatorname{Im}}
\newcommand{\sgn}{\operatorname{sgn}}
\renewcommand{\H}{\mathbb{H}}
\newcommand{\SL}{\text{\rm SL}}
\newcommand{\vast}{\bBigg@{4}}
\newcommand{\Vast}{\bBigg@{5}}
\renewcommand{\b}[1]{\boldsymbol{#1}}
\renewcommand{\pmod}[1]{\  \,  \left( \rm{mod} \,  #1 \right)}
\newlength{\parenheight}
\newlength{\parendepth}
\newlength{\parendrop}
\newcommand{\paren}[4]{%
	\settoheight{\parenheight}{\(#4 #2\)}%
	\settodepth{\parendepth}{\(#4 #2\)}
	\addtolength{\parendepth}{.5ex}
	\addtolength{\parenheight}{-.5ex}
	\addtolength{\parenheight}{\parendepth}
	\addtolength{\parendepth}{-.5\parenheight}
	\setlength{\parendrop}{-.5\parenheight}
	\addtolength{\parendrop}{.5ex}
	\raisebox{-\parendepth}{\(#4
		\left#1%
		\rule[\parendrop]{0pt}{\parenheight}%
		\right.\)}
	#2
	\raisebox{-\parendepth}{\(#4
		\left.%
		\rule[\parendrop]{0pt}{\parenheight}%
		\right#3\)}
}
\def\myleft#1#2\myright#3{%
	\mathchoice{%
		\paren{#1}{#2}{#3}{\displaystyle}%
	}{%
	\paren{#1}{#2}{#3}{\textstyle}%
}{%
\paren{#1}{#2}{#3}{\scriptstyle}%
}{%
\paren{#1}{#2}{#3}{\scriptscriptstyle}%
}%
}
\renewenvironment{proof}[1][Proof]{\begin{trivlist} \item[\hskip \labelsep {\bfseries #1:}]}{\qed\end{trivlist}}
\thanks{The research of the first author is supported by the Alfried Krupp Prize for Young University Teachers of the Krupp foundation. The third author was partially supported by the NSF grant DMS-1601070}
\title{Rank two false theta functions and Jacobi forms of negative definite matrix index}
\author[K. Bringmann]{Kathrin Bringmann}
\author[J. Kaszian]{Jonas Kaszian}
\author[A. Milas]{Antun Milas}
\author[S. Zwegers]{Sander Zwegers}
\address{Kathrin Bringmann, University of Cologne, Department of Mathematics and Computer Science, Weyertal 86-90, 50931 Cologne, Germany}
\email{kbringma@math.uni-koeln.de}
\address{Jonas Kaszian, University of Cologne, Department of Mathematics and Computer Science, Weyertal 86-90, 50931 Cologne, Germany}
\email{jkaszian@math.uni-koeln.de}
\address{Antun Milas, Department of Mathematics and Statistics, SUNY-Albany, Albany, NY 12222, U.S.A.}
\email{amilas@albany.edu}
\address{Sander Zwegers, University of Cologne, Department of Mathematics and Computer Science, Weyertal 86-90, 50931 Cologne, Germany}
\email{szwegers@math.uni-koeln.de}
\begin{document}
\begin{abstract}
In this paper, we study a family of rank two false theta series associated to the root lattice of type $A_2$. We show that these functions appear as Fourier coefficients of a meromorphic Jacobi form
of negative definite matrix index. Hypergeometric $q$-series identities are also obtained.
\end{abstract}
\maketitle

\section{Introduction and statement of results}


(Holomorphic) Jacobi forms are complex functions on $\mathbb{C} \times \mathbb{H}$ that transform nicely under the Jacobi group. They generalize modular forms, have an associated weight and index (which is a positive half-integer), and include the classical Jacobi theta function. Jacobi forms are related to modular forms in various ways. One of these connections is through the so-called theta decomposition, which in particular implies that Fourier coefficients (in the $z$-variable) of holomorphic Jacobi forms are modular forms \cite{EZ}. The situation is more difficult in the case that the Jacobi form has poles in the elliptic $z$-variable. In this case more complicated modular type objects occur \cite{BF,DMZ,Ol,Zw}. If the Jacobi forms have poles in $z$, then the index of the Jacobi form may also be negative and in this case false theta functions occur \cite{BCR, BRZ}. False theta functions are similar to theta functions but some of the signs are different, which prevents them from being modular forms. The notion of Jacobi form can be easily generalized to several complex variables. Prominent examples of  multi-variable Jacobi forms come from characters of integrable highest weight modules for affine Lie algebras \cite{KP}. More recently, the study of mock theta functions \cite{Zw}, quantum black holes \cite{DMZ}, and Kac--Wakimoto characters of affine Lie superalgebras \cite{KW} put more emphasis on meromorphic Jacobi forms and their Fourier coefficients.

In \cite{BKM,BM}, the first three authors studied a family of rank two theta-like series (throughout $p\in\N_{\geq 2}$): 
\begin{align*} 
F(\zeta_1,\zeta_2;q)&:=\sum_{n_1,n_2\in\Z}\frac{q^{p\left(\left(n_1-\frac{1}{p}\right)^2+\left(n_2-\frac{1}{p}\right)^2-\left(n_1-\frac{1}{p}\right)\left(n_2-\frac{1}{p}\right)\right)}}{\left(1-\zeta_1^{-1}\right)\left(1-\zeta_2^{-1}\right)\left(1-\zeta_1^{-1}\zeta_2^{-1}\right)}\left(\zeta_1^{n_1-1} \zeta_2^{n_2-1} \!\!-\zeta_1^{-n_1+n_2-1}\zeta_2^{n_2-1} \right.\\
&\qquad\quad-\zeta_1^{n_1-1}\zeta_2^{-n_2+n_1-1}\!+\left.\zeta_1^{-n_2-1} \zeta_2^{-n_2+n_1-1}\!+\zeta_1^{-n_1+n_2-1} \zeta_2^{-n_1-1}\!-\zeta_1^{-n_2-1}\zeta_2^{-n_1-1}\right).
\end{align*}
It is not difficult to see that the rational function in $\zeta_1$ and $\zeta_2$ showing up as the summand in $F(\zeta_1,\zeta_2;q)$ is actually a Laurent polynomial. 
As demonstrated in \cite[Section 4.3]{BM}, the constant term of $F(\zeta_1,\zeta_2;q)$, taken with respect to $\zeta_1$ and $\zeta_2$, is given by 
\begin{equation*}
F(q):=\sum_{n_1,n_2 \geq 1 \atop n_1 \equiv n_2 \ \pmod{3}} {\rm min}(n_1,n_2)\,q^{\frac{p}{3}\left(n_1^2+n_2^2+n_1n_2\right)-n_1-n_2+\frac{1}{p}} \left(1-q^{n_1})(1-q^{n_2})(1-q^{n_1+n_2}\right).
\end{equation*}
In \cite{BKM}, the last expression was called the {\it $\frak{sl}_3$  false theta} function. 
The $q$-series $F$ has several interesting properties, and in particular, it can be used to construct a non-trivial example of a so-called depth two quantum modular form introduced in \cite[Theorem 1.1]{BKM}.
The above construction can be viewed as a two variable version of a more familiar example involving one complex variable $\zeta$ \cite[Section 4.2]{BM}. Indeed, taking the Fourier coefficients of 
\begin{equation} \label{rank-one}
\sum_{n \in \mathbb{Z}}  q^{p\left(n+\frac{p-1}{2p}\right)^2} \frac{\zeta^{2n+1}-\zeta^{-2n-1}}{\zeta-\zeta^{-1}},
\end{equation}
results in classical false theta functions studied by numerous authors. For instance, for $p=2$, the constant coefficient of (\ref{rank-one})  is essentially the classical
Roger's false theta function
$$
\sum_{n \geq 0} (-1)^n q^{\frac{n(n+1)}{2}}.
$$
Interestingly, all Fourier coefficients of (\ref{rank-one}) are basically so-called quantum modular forms with quantum set $\mathbb{Q}$ (see \cite[Theorem 4.1]{BM} for details).

Both (\ref{rank-one}) and $F(\zeta_1,\zeta_2;q)$ are directly related to characters of representations of certain $W$-algebras. As demonstrated by Feigin and Tipunin \cite{FT}, such vertex algebras can be associated to any simply-laced root lattice of ADE type. We omit discussing the precise connection here because it is out of the scope of this paper (see  \cite{AM, BM,C, CM, FT} for more details).

In a somewhat different direction, Kac and Wakimoto \cite[Example 2]{KW} recently obtained product formulas for irreducible characters of  representations of affine Lie algebras at the boundary admissible levels. For the Lie algebra $\widehat{\frak{sl}}_N$ at level $-\frac{N}{2}$, where $N$ is odd, their formula takes an elegant  shape, namely \begin{equation} \label{new}
{\rm ch}\left[L\left(-\tfrac{N}{2} \Lambda_0\right)\right] \left(z_1,z_2,...,z_{N-1}; \tau \right)=\left( \frac{\eta(\tau)}{\eta(2 \tau)} \right)^{\frac12(N-1)(N-2)}  \prod_{1 \leq j \leq k \leq N-1}  \frac{\vartheta\left(\sum_{r=j}^k z_j; 2 \tau\right)}{\vartheta\left(\sum_{r=j}^k z_j;  \tau\right)},
\end{equation}
where throughout $q:=e^{2\pi i \tau}$, $\eta(\tau):=q^{\frac{1}{24}}\prod_{n=1}^\infty(1-q^n)$ is \textit{Dedekind's $\eta$-function}, $\vartheta(z;\tau):=\sum_{n\in\frac12+\mathbb Z}q^{\frac{n^2}{2}}e^{2\pi in(z+\frac12)}$ is the \textit{Jacobi theta function}, and $L(-\frac{N}{2} \Lambda_0)$ denotes the simple affine vertex operator algebra of level $-\frac{N}{2}$. Due to the presence of theta functions, the modular properties of these characters can easily be determined. A much more interesting and harder problem is to investigate modular properties of characters of the corresponding parafermionic vertex algebra $K(\frak{sl}_N,-\frac{N}{2})$ and its representations. Characters of such modules are obtained by taking the Fourier expansion of  ${\rm ch}[L(-\frac{N}{2} \Lambda_0)]$ with respect to the variables $z_j$ in a particular range. Since the Jacobi form appearing on the right-hand side of (\ref{new}) is meromorphic of negative definite (matrix) index it is not clear what modular transformation properties these coefficients should possess. This is in contrast to the situation of the characters of representations of integrable highest weight modules, which are holomorphic and whose Fourier coefficients are modular forms \cite[Section 4.4]{KP}.

The aim of this paper is to connect the Fourier coefficients of $F(\zeta_1,\zeta_2;q)$ for $p=2$ to the Fourier coefficients in Kac--Wakimoto's character formula for $N=3$. Strong hints that they are related come from the work on logarithmic $W$-algebras \cite{FT} combined with work of Adamovic \cite{Ad}, specifically his important realization of $K\left(\frak{sl}_3,-\frac{3}{2} \right)$ \cite[Theorem 11.1]{Ad}. As for $N >3$, we do not know whether Fourier coefficients of (\ref{new}) can be expressed using higher rank false theta functions introduced in \cite[formula (1.2)]{BM}. 

We obtain the following result (see Theorem \ref{TheoremCoeffJacobiExplicit} for a more explicit version) in the case that $p=2$.

\begin{theorem}\label{TheoremCoeffJacobi}
	For $p=2$, all Fourier coefficients of $F(\zeta_1,\zeta_2;q)$ appear as coefficients of
	a (single) meromorphic Jacobi form of matrix index 
	 $-\frac12\left(\begin{smallmatrix} 2& 1 \\ 1 & 2 \end{smallmatrix}\right)$ and weight two with respect to some congruence subgroup. 
\end{theorem}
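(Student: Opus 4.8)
The plan is to deduce the statement from the sharper Theorem~\ref{TheoremCoeffJacobiExplicit}: I would exhibit one concrete meromorphic Jacobi form $\Phi(z_1,z_2;\tau)$, verify directly that it has weight two and matrix index $-\frac12\left(\begin{smallmatrix}2&1\\1&2\end{smallmatrix}\right)$ on a suitable congruence subgroup, and then identify each Fourier coefficient of $F$ (for $p=2$) with a Fourier coefficient of $\Phi$, where $\zeta_j=e^{2\pi iz_j}$ and $q=e^{2\pi i\tau}$.

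First I would write down the candidate $\Phi$. Since the stated goal is to connect $F$ with the $N=3$ Kac--Wakimoto character, the natural choice is a normalization of the right-hand side of \eqref{new} for $N=3$, built from the eta-quotient $\eta(\tau)/\eta(2\tau)$ and the three theta-ratios $\vartheta(w;2\tau)/\vartheta(w;\tau)$ indexed by the positive roots $w\in\{z_1,\,z_2,\,z_1+z_2\}$ of $A_2$; the precise normalization (including the eta-power that makes the total weight equal to two) is pinned down in the explicit theorem. Because $\Phi$ is a ratio of theta functions and eta functions, its transformation behaviour is governed entirely by the standard elliptic and modular laws for $\vartheta(\,\cdot\,;\tau)$, $\vartheta(\,\cdot\,;2\tau)$ and $\eta$. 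Computing the elliptic transformation root by root, the three factors contribute the symmetric matrix $\left(\begin{smallmatrix}2&1\\1&2\end{smallmatrix}\right)=\sum_{w}ww^{T}$ (the sum over the positive roots $w=(1,0),(0,1),(1,1)$), with the overall negative sign coming from the denominator theta functions; the level of the congruence subgroup is then forced by the $2\tau$-arguments.

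The core of the argument is the coefficient identity. Here I would expand the three reciprocal factors $1/\vartheta(w;\tau)$ by their partial-fraction (Mittag-Leffler) expansions in $\zeta_1$ and $\zeta_2$, valid in a fixed chamber of the $(z_1,z_2)$-domain determined by the location of the poles, multiply through by the numerator theta functions and the eta-quotient, and collect the coefficient of each monomial $\zeta_1^a\zeta_2^b$. The resulting $q$-series are the candidates to be matched with the Fourier coefficients of $F$. Structurally the match is plausible because both objects carry the same $A_2$ quadratic exponent $p(n_1^2+n_2^2-n_1n_2)$ and the same sixfold $S_3$ antisymmetrization: the denominator $(1-\zeta_1^{-1})(1-\zeta_2^{-1})(1-\zeta_1^{-1}\zeta_2^{-1})$ of $F$ is precisely the $A_2$ Weyl denominator, corresponding to the three denominator theta functions of $\Phi$. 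The false-theta sign patterns of the coefficients of $F$---exactly what obstructs them from being ordinary theta functions---should emerge from the alternating signs and truncations built into the partial-fraction expansion in the chosen chamber, reproducing in particular the $\min(n_1,n_2)$-multiplicities visible in the constant term $F(q)$.

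I expect the main obstacle to be the genuinely two-dimensional pole structure. In the rank-one analogue \eqref{rank-one} one expands a single reciprocal theta, but here the three root hyperplanes $z_1\equiv0$, $z_2\equiv0$, $z_1+z_2\equiv0$ (modulo the period lattice) intersect, so the partial-fraction decomposition and the attendant chamber bookkeeping must be organized consistently to produce a well-defined double Fourier expansion. Keeping track of which monomials occur, and with which signs and ranges of summation, so that the genuine theta functions of $\Phi$ reproduce the false theta coefficients of $F$, is the technical heart of the proof; once the chamber is fixed and the geometric series are summed, the remaining identifications are routine bookkeeping, and $\Phi$ then displays all Fourier coefficients of $F$ as claimed.
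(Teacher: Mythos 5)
Your plan is correct and follows essentially the same route as the paper: the Jacobi form is $\frac{\eta(\tau)^5}{\eta(2\tau)}f(\boldsymbol{z};\tau)$ with $f$ the $N=3$ Kac--Wakimoto theta quotient, its weight two and matrix index $-\frac12\left(\begin{smallmatrix}2&1\\1&2\end{smallmatrix}\right)$ follow from the standard elliptic and modular transformations of $\vartheta$ and $\eta$, and the coefficient identity is proved by expanding the Weyl denominator of $F$ on one side and the reciprocal theta functions on the other in the region $|q|<|\zeta_j|<1$, $|q|<|\zeta_1\zeta_2|<1$. The ``chamber bookkeeping'' you correctly flag as the technical heart is exactly what the paper executes, by first rewriting each ratio via $\vartheta(z;2\tau)/\vartheta(z;\tau)=-i\zeta^{-1/2}q^{-1/4}\eta(2\tau)^2/\bigl(\eta(\tau)\vartheta(z+\tau;2\tau)\bigr)$ and then applying the Jordan--Kronecker identity together with Andrews' partial fraction expansion, before matching the resulting sign patterns with the $\min(n_1,n_2)$-weighted series $\mathbb{G}_{\boldsymbol{r}}$.
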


Motivated by several known $q$-hypergeometric expressions for partial and false theta functions \cite{AW, Wa1,Wa2} (some going back to Ramanujan \cite[entry 9]{Berndt}), using 
results from the proof of Theorem \ref{TheoremCoeffJacobi}, we give $q$-hypergeometric formulas for a class of false theta functions parametrized by pairs of integers $(r_1,r_2)$. In the special case corresponding to $(r_1,r_2)=(0,0)$, we obtain an elegant identity; more general identities are given in Proposition \ref{prop}.
\begin{theorem}\label{thm:elegant identity}
	We have
	\begin{multline*} 
	(q;q)_\infty^{-2} \left(q^2;q^2\right)_\infty^{-2}\sum_{\substack{ n_1 \geq 0 \\ n_2 \in \mathbb{Z}}} {\rm sgn}^*(n_2)(-1)^{n_1}  q^{\frac{n_1(n_1+1)}{2}+n_1n_2+2n_2^2+2n_2 }    \\ 
	= \sum_{\substack{n_1,n_2,n_3\in \N_0\\ n_4\in\Z}} \frac{q^{2n_1+2n_2+2n_3+ 3 |n_4| }}{(q^2;q^2)_{n_1} (q^2;q^2)_{n_1+|n_4|}(q^2;q^2)_{n_2} (q^2;q^2)_{n_2+ |n_4|}  (q^2;q^2)_{n_3} (q^2;q^2)_{n_3+ |n_4|}},
	\end{multline*}
	where $(a;q)_n:=\prod_{j=0}^{n-1}(1-aq^j)$ for $n\in\N_0\cup\{\infty\}$ and where ${\rm sgn}^*(n):=1$ if $n \geq 0$ and $-1$ otherwise.
\end{theorem}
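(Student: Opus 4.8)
The plan is to evaluate a single object --- the constant term (the $(r_1,r_2)=(0,0)$ Fourier coefficient in the variables $\zeta_1=e^{2\pi iz_1}$, $\zeta_2=e^{2\pi iz_2}$) of the Kac--Wakimoto character in (\ref{new}) for $N=3$ at level $-\frac32$, corresponding to $p=2$ --- in two different ways, and then to equate the two evaluations. On one side this coefficient is, by the computation underlying the proof of Theorem \ref{TheoremCoeffJacobi} (in the explicit form of Theorem \ref{TheoremCoeffJacobiExplicit}), the false theta function on the left-hand side; on the other side a direct product expansion will produce the hypergeometric multisum on the right-hand side.

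For the right-hand side I would begin from the product in (\ref{new}). For $N=3$ the prefactor reduces to $\frac{\eta(\tau)}{\eta(2\tau)}$ and the product runs over the three pairs $(j,k)\in\{(1,1),(1,2),(2,2)\}$, producing the three quotients $\frac{\vartheta(w;2\tau)}{\vartheta(w;\tau)}$ with $w\in\{z_1,z_1+z_2,z_2\}$. The key simplification is that the Jacobi triple product collapses each such quotient to
\[
\frac{\vartheta(w;2\tau)}{\vartheta(w;\tau)}=q^{\frac18}\,\frac{(q^2;q^2)_\infty}{(q;q)_\infty}\,\frac{1}{(\zeta_wq;q^2)_\infty(\zeta_w^{-1}q;q^2)_\infty},\qquad \zeta_w:=e^{2\pi iw},
\]
so that, after absorbing the $\eta$-quotient, the character equals $q^{\frac13}\frac{(q^2;q^2)_\infty^2}{(q;q)_\infty^2}$ times the product of the three factors $\frac{1}{(\zeta_wq;q^2)_\infty(\zeta_w^{-1}q;q^2)_\infty}$. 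Expanding each factor by Cauchy's identity, its coefficient of $\zeta_w^{m}$ is $\sum_{n\geq0}\frac{q^{2n+|m|}}{(q^2;q^2)_n(q^2;q^2)_{n+|m|}}$, and since no factor has a pole on $|\zeta_w|=1$ the constant term is a legitimate torus average.

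The coupling of the three factors is what generates the common index. Writing $m_1,m_2,m_3$ for the $\zeta$-exponents of the quotients at $z_1$, $z_1+z_2$, $z_2$, the total power of $\zeta_1$ is $m_1+m_2$ and of $\zeta_2$ is $m_2+m_3$; extracting $\zeta_1^0\zeta_2^0$ therefore forces $m_1=m_3=-m_2$, so that all three exponents share one absolute value $|m_2|=:|n_4|$. Writing $m_2=n_4\in\Z$ and reading off the three coefficients produces exactly the factor $q^{3|n_4|}$ together with the symmetric product $\prod_{i=1}^3\frac{q^{2n_i}}{(q^2;q^2)_{n_i}(q^2;q^2)_{n_i+|n_4|}}$; summing over $n_4\in\Z$ and $n_1,n_2,n_3\in\N_0$ gives precisely the right-hand side.

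For the left-hand side I would invoke the theta-type decomposition established in the proof of Theorem \ref{TheoremCoeffJacobi}: the same constant-term coefficient of this meromorphic Jacobi form of negative index is expressed through false theta functions, the function ${\rm sgn}^*(n_2)$ being exactly the signature arising when one extracts Fourier coefficients in a fixed range of $z_1,z_2$ (as in \cite{BCR,BRZ}). Equating the two evaluations and cancelling the common factor $q^{\frac13}\frac{(q^2;q^2)_\infty^2}{(q;q)_\infty^2}$ yields the identity, with the normalizing prefactor $(q;q)_\infty^{-2}(q^2;q^2)_\infty^{-2}$ appearing on the left. I expect the main obstacle to be precisely this matching: one must check that the annulus in which the Cauchy/Laurent expansion is carried out on the product side corresponds to the chamber in which the false theta function is produced on the decomposition side, and keep exact track of the $q$-power $q^{\frac13}$ and of the powers of $(q;q)_\infty$ and $(q^2;q^2)_\infty$; the presence of ${\rm sgn}^*$ is the very signal that the two chambers differ, which is what forces the left-hand series to be a false rather than a genuine theta function.
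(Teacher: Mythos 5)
Your proposal is correct and follows essentially the same route as the paper: your product-side computation (collapsing each $\vartheta(w;2\tau)/\vartheta(w;\tau)$ via the triple product, expanding each factor by Cauchy's identity, and letting the constant-term condition couple the three exponents into a single $|n_4|$) is exactly the content of \eqref{T2T}/\eqref{thetafrac}, Lemma \ref{lem:auxlem}, and Proposition \ref{prop}, while your false-theta side is Lemma \ref{JacobiExplicit} (Corollary \ref{cor-00}), which is also what the paper's proof invokes. The only cosmetic difference is that you normalize by the Kac--Wakimoto character $\frac{\eta(\tau)}{\eta(2\tau)}f(\boldsymbol{z};\tau)$ rather than by $f$ itself, and your bookkeeping of the resulting $q$-powers and $\eta$-factors checks out.
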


The paper is organized as follows. In Section \ref{sec-JacobiTheta}, we discuss the classical Jacobi theta function and certain Jacobi forms in two variables. In Section \ref{sec-proofSecion}, we prove Theorem \ref{TheoremCoeffJacobi} in several steps: In Subsection \ref{sec-coeffF} we introduce a family of false theta functions in two summation variables, denoted by $\mathbb{G}_{(\lambda_1,\lambda_2)}$. 
These $q$-series are essentially generalizations of the rank two false theta function $F(q)$. 
 In Proposition \ref{ct} we give an explicit formula for the Fourier coefficients of $F(\zeta_1,\zeta_2;q)$ in terms of $\mathbb G_{(\lambda_1,\lambda_2)}$. We next compute the Fourier coefficients of the relevant Jacobi form in Subsection \ref{sec-JacobiCoeffs}.
Finally we combine these results to prove the first main result of the paper in Subsection \ref{sec-proofMainTheorem}. In Section \ref{sec-additionalChars}, we determine the Fourier coefficients of two additional characters from \cite{KW} using results from Subsection \ref{sec-JacobiCoeffs}. In Section 5, we employ results from Section 4 to give $q$-hypergeometric formulas for a class of functions $\mathbb{G}_{(\lambda_1,\lambda_2)}$.
Section 6 is concerned with the $q$-series $\lim_{(\zeta_1,\,\zeta_2) \to (1,1)} F(\zeta_1,\zeta_2;q)$, for $p=2$. This limit is important from the point of view of representation theory \cite[Section 4.3]{BM}.
We show that it can be computed as the constant term of a particular index zero Jacobi form (see Theorem \ref{index-zero}).

\section*{Acknowledgements}
The authors thank Chris Jennings-Shaffer for helpful comments on an earlier version of this paper.

\section{Jacobi theta functions}\label{sec-JacobiTheta}
We first recall some properties of the Jacobi theta function. By the Jacobi triple product identity, we have 
\[
\vartheta(z;\tau)= -iq^{\frac18} \zeta^{-\frac12} \left(\zeta,\zeta^{-1}q,q;q\right)_\infty,
\]
where $(a_1,\ldots,a_\ell;q)_n:=(a_1;q)_n\ldots(a_\ell;q)_n$ and $\zeta:=e^{2\pi i z}$ throughout. Then, with $\chi$ the multiplier of $\eta$, we have for $m,\ell\in\Z$ and $\gamma=\left(\begin{smallmatrix}
a & b \\ c & d
\end{smallmatrix}\right)\in\SL_2(\Z)$
\begin{align}\label{TranE}
\vartheta(z+m\tau+\ell;\tau)&=(-1)^{m+\ell}q^{-\frac{m^2}{2}}\zeta^{-m}\vartheta(z;\tau),\\
\vartheta\left(\frac{z}{c\tau+d};\frac{a\tau+b}{c\tau+d}\right)&=\chi(\gamma)^3(c\tau+d)^\frac12e^{\frac{\pi icz^2}{c\tau+d}}\vartheta(z;\tau).\label{TranM}
\end{align}

In this paper, we deal with functions satisfying a higher-dimensional generalization of these transformations which we now recall. Here and throughout bold letters denote vectors such as $\b{z}\in\C^N$, $N\in\N$.
\begin{definition}
	Let $L_1,L_2\subset \Z^N$ be lattices, $\nu_1:\Gamma\to S^1:= \{z\in\C: \lvert z\rvert=1 \}$ a multiplier, and $\nu_2: L_1\times L_2:\Gamma\to S^1$ a homomorphism with finite image, and $N\in\N$.
	We call a meromorphic function $g:\C^N\times \H\to \C$ a \emph{Jacobi form of matrix index} $M\in \nolinebreak\frac14\Z^{N\times N}$ (with $M^T=M$ and  $M_{j,j}\in\frac12\Z$ for $j\in\{1,\dots,N\}$) and weight $k\in\frac12\Z$ for $\Gamma\subset\SL_2(\Z)$ with respect to $L_1\times L_2$ and $\nu_1$,$\nu_2$ if it satisfies the following transformation laws (for all $(\b{z}, \tau) \in \C^N\times \H$): 
	\begin{enumerate}
		\item For $\b{m}\in L_1$, $\b{\ell}\in L_2$ we have
		\begin{align*}
		g\left(\b{z}+\b{m}\tau +\b{\ell};\tau \right) = \nu_2(\b{m},\b{\ell}) q^{-\b{m}^T M \b{m}} e^{-4\pi i \b{m}^T M \b{z}}
		g(\b{z};\tau).
		\end{align*}
		
		\item For $\gamma=\left(\begin{smallmatrix}
		a & b\\c & d
		\end{smallmatrix}\right)\in \Gamma$ we have
		\begin{align*}
		g\left(\frac{\b{z}}{c\tau+d};\frac{a\tau+b}{c\tau+d}\right) = \nu_1(\gamma) \left(c\tau+d\right)^k e^{\frac{2\pi i c}{c\tau +d} \b{z}^T M \b{z}}
		g(\b{z};\tau).
		\end{align*}
		\item For some $a>0$, we have
		\begin{align*}
		g(\b{z};\tau) e^{-\frac{4\pi }{\im(\tau)} \im(\b{z})^TM\im(\b{z})}\in O\left(e^{a\im(\tau)}\right)     \qquad \text{as }\Im(\tau)\to \infty.
		\end{align*}.   
	\end{enumerate}
We say that $g$ is of positive (resp. negative) matrix index if $M$ is a positive (resp. negative) definite matrix.
\end{definition}

In this paper, we are concerned with the Jacobi form
\begin{equation} \label{f}
f(\b z;\tau):=\frac{\vartheta(z_1;2\tau)\vartheta(z_2;2\tau)\vartheta(z_1+z_2;2\tau)}{\vartheta(z_1;\tau)\vartheta(z_2;\tau)\vartheta(z_1+z_2;\tau)}
\end{equation}
and a slightly dilated $A_2$ theta function
\[
\mathcal{T}(\b z;\tau):=\Theta_{A_2}(z_1+2z_2,z_1-z_2;2\tau),\qquad \textnormal{where }\quad\Theta_{A_2}(\b z;\tau):=\sum_{\b n \in \mathbb{Z}^2} q^{Q(\b n)} e^{2\pi i(n_1z_1+n_2z_2)}
\]
with the quadratic form $Q(\b n):=n_1^2+n_2^2-n_1n_2$. We prove the following transformation.
\begin{proposition} \label{prop-index} The function $f$ is Jacobi form of weight zero and matrix index $-\frac12\left(\begin{smallmatrix} 2& 1 \\ 1 & 2 \end{smallmatrix}\right)$. 
 More precisely, we have for $\gamma=\left(\begin{smallmatrix}
a & b \\ c & d
\end{smallmatrix}\right)\in\Gamma_0(2)$, $\b m\in2\Z^2$, and $\b \ell\in\Z^2$,  
\begin{align*}
f \left(\frac{\b z}{c\tau+d};\frac{a \tau+b}{c \tau+d}\right)&= \nu(\gamma) e^{-\frac{\pi i c}{c \tau+d}Q^*(\b{z})} f(\b z;\tau),  \\
f(\boldsymbol{z}+ \boldsymbol{m} \tau + \boldsymbol{\ell};\tau)&= q^{\frac12 Q^*(\b{m})} \zeta_1^{m_1 + \frac{m_2}{2}} \zeta_2 ^{m_2 + \frac{m_1}{2}} f(\boldsymbol{z};\tau),
\end{align*}
where  $\nu(\gamma) :=\chi\left(\begin{smallmatrix}
a & 2b \\ \frac c2 & d
\end{smallmatrix}\right)^9\chi(\gamma)^{-9}$, $Q^*(\b{z}):=z_1^2+z_2^2+z_1 z_2$, and $\zeta_j:=e^{2\pi iz_j}$.

The theta function $\mathcal{T}$ is a weight one Jacobi form of matrix index $\frac12\left(\begin{smallmatrix} 2& 1 \\ 1 & 2 \end{smallmatrix}\right)$.
To be more precise, we have for $\left(\begin{smallmatrix}
a&b\\ c&d
\end{smallmatrix}\right) \in \Gamma_0(6)$, $\boldsymbol{m} \in \Z^2$, and $\boldsymbol{\ell} \in \Z^2$ 
\begin{align}\label{Tmod}
\mathcal{T}\left(\frac{\boldsymbol{z}}{c\tau +d}; \frac{a\tau +b}{c\tau +d}\right) &= \left(\frac{-3}{d}\right) (c\tau +d) e^{\frac{\pi i c}{c\tau +d}Q^*(\b{z})} \mathcal{T}(\boldsymbol{z};\tau),\\ \label{Tel}
\mathcal{T}(\boldsymbol{z}+ \boldsymbol{m}\tau + \boldsymbol{\ell}; \tau)&= q^{-\frac12 Q^*(\b{m})} \zeta_1^{-m_1- \frac{m_2}{2}} \zeta_2^{-\frac{m_1}{2}- m_2} \mathcal{T}(\boldsymbol{z};\tau),
\end{align}
where $\left(\frac{\ { \cdot} \  }{\cdot}\right)$ denotes the Jacobi symbol.
\end{proposition}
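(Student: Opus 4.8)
The plan is to derive both sets of transformation laws from the known behavior of the classical Jacobi theta function, namely \eqref{TranE} and \eqref{TranM} for $f$, and from Poisson summation (equivalently, the Weil representation attached to the $A_2$-lattice) for $\mathcal{T}$. The one structural subtlety throughout is the simultaneous appearance of the arguments $\tau$ and $2\tau$, which is exactly what forces the congruence conditions $\Gamma_0(2)$, $\Gamma_0(6)$ and the sublattice on which the elliptic laws are clean.

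For $f$ I would treat the elliptic and modular laws separately. For the elliptic law, since the numerator involves $\vartheta(\cdot;2\tau)$ (nome $q^2$), I rewrite a period shift $\b{m}\tau$ as $\frac{\b{m}}{2}(2\tau)$; this is an integral period for the nome $q^2$ precisely when $\b{m}\in 2\Z^2$, which explains the stated domain. Applying \eqref{TranE} to each of the six theta factors (three with nome $q^2$, three with nome $q$) and collecting the powers of $(-1)$, $q$, $\zeta_1$, $\zeta_2$ yields the asserted factor $q^{\frac12 Q^*(\b{m})}\zeta_1^{m_1+\frac{m_2}{2}}\zeta_2^{m_2+\frac{m_1}{2}}$, where the identity $z_1^2+z_2^2+(z_1+z_2)^2=2Q^*(\b{z})$ is what produces the quadratic form $Q^*$. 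For the modular law the key trick is that for $\gamma=\left(\begin{smallmatrix}a&b\\c&d\end{smallmatrix}\right)\in\Gamma_0(2)$ one has $2\cdot\frac{a\tau+b}{c\tau+d}=\frac{a(2\tau)+2b}{\frac c2(2\tau)+d}=\gamma'(2\tau)$ with $\gamma'=\left(\begin{smallmatrix}a&2b\\ \frac c2&d\end{smallmatrix}\right)$, and the membership $\gamma'\in\SL_2(\Z)$ is exactly the condition $c\equiv 0\pmod 2$. Applying \eqref{TranM} with $\gamma'$ to each numerator factor (noting $\frac c2(2\tau)+d=c\tau+d$) and with $\gamma$ to each denominator factor, the six half-integral weight factors $(c\tau+d)^{\frac12}$ cancel, giving weight $0$; the multiplier assembles into $\nu(\gamma)=\chi(\gamma')^9\chi(\gamma)^{-9}$; and the Gaussian exponents combine, via the same identity together with $\frac c2-c=-\frac c2$, into $e^{-\frac{\pi i c}{c\tau+d}Q^*(\b{z})}$. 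Reading off the coefficient of the quadratic form identifies the matrix index.

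For $\mathcal{T}$ I would first write it as the lattice sum $\mathcal{T}(\b{z};\tau)=\sum_{\b{n}\in\Z^2}q^{2Q(\b{n})}\zeta_1^{n_1+n_2}\zeta_2^{2n_1-n_2}$. The elliptic law then follows by substituting $\b{z}\mapsto\b{z}+\b{m}\tau+\b{\ell}$ directly into the summand: the shift by $\b{\ell}\in\Z^2$ contributes a trivial phase (all exponents being integral), while the shift by $\b{m}\tau$ produces a linear term in the exponent that is removed by completing the square, i.e.\ by translating the summation index, reproducing $\mathcal{T}$ up to the elementary prefactor $q^{-\frac12 Q^*(\b{m})}\zeta_1^{-m_1-\frac{m_2}{2}}\zeta_2^{-\frac{m_1}{2}-m_2}$. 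For the modular law I would invoke Poisson summation: $\Theta_{A_2}$ is the theta function of the (rescaled) root lattice $A_2$, whose discriminant group is $\Z/3\Z$, so under $\tau\mapsto-1/\tau$ it transforms with weight $1$ into the theta of the dual lattice, governed by the quadratic Gauss sum modulo $3$. The dilation $\tau\mapsto2\tau$ multiplies the level by $2$, so the combined object is modular on $\Gamma_0(6)$; the accumulated root of unity from the Gauss sum is precisely the Jacobi symbol $\left(\frac{-3}{d}\right)$, and the $\b{z}$-dependent Gaussian yields $e^{\frac{\pi i c}{c\tau+d}Q^*(\b{z})}$, giving the positive matrix index.

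I expect the main obstacle to be the modular transformation of $\mathcal{T}$: establishing the precise congruence subgroup $\Gamma_0(6)$ and, above all, pinning the multiplier down as the exact Jacobi symbol $\left(\frac{-3}{d}\right)$ requires carefully composing the $S$- and $T$-transformations of the $A_2$-theta (tracking the Weil representation and the associated Gauss sums through the dilation), rather than merely proving modularity up to an unspecified character. For $f$ the analogous delicate point is the bookkeeping of the $\eta$-multiplier, namely verifying that $\chi(\gamma')^9\chi(\gamma)^{-9}$ is a well-defined finite-order multiplier on $\Gamma_0(2)$ compatible with the elliptic law; the remaining computations are routine applications of \eqref{TranE} and \eqref{TranM}.
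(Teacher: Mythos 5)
Your treatment of $f$ is correct and is exactly the argument the paper compresses into the single line ``using \eqref{TranE} and \eqref{TranM}'': the rewriting $2\cdot\frac{a\tau+b}{c\tau+d}=\gamma'(2\tau)$ with $\gamma'=\left(\begin{smallmatrix}a&2b\\ c/2&d\end{smallmatrix}\right)$ and $\b{m}\tau=\frac{\b m}{2}(2\tau)$ is the whole content, and your bookkeeping of signs, $\eta$-multipliers, and the identity $z_1^2+z_2^2+(z_1+z_2)^2=2Q^*(\b z)$ checks out. For \eqref{Tmod} you take a genuinely different route: the paper does not redo Poisson summation but quotes Proposition 3.8 of Kac--Peterson for $\Theta_{A_2}$ on $\Gamma_0(3)$ and then composes with the same dilation trick (requiring $\gamma'\in\Gamma_0(3)$ forces $c\equiv 0\pmod 6$, and $Q^*(z_1+2z_2,z_1-z_2)=3Q^*(\b z)$ converts the exponential factor). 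Your plan is viable but leaves precisely the hard part --- pinning the multiplier down to $\left(\frac{-3}{d}\right)$ via Gauss sums --- unexecuted; citing the known $A_2$ transformation is the efficient way to discharge it.

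The genuine gap is in your proof of the elliptic law \eqref{Tel}. Writing $\mathcal{T}(\b z;\tau)=\sum_{\b n\in\Z^2}q^{2Q(\b n)}\zeta_1^{n_1+n_2}\zeta_2^{2n_1-n_2}$, the translation of the summation index needed to absorb the linear term produced by $\b z\mapsto\b z+\b m\tau$ is $\b n\mapsto\b n-\left(\frac{m_1+m_2}{2},\frac{m_1}{2}\right)$, which lies in $\Z^2$ only when $\b m\in2\Z^2$. For $\b m=(1,0)$, for instance, the left-hand side of \eqref{Tel} has an integral Fourier expansion in $\zeta_2$ while the right-hand side carries the factor $\zeta_2^{-1/2}$, so the identity cannot hold; ``completing the square'' silently requires the shift of the summation index to be integral. (The same congruence obstruction affects the auxiliary formula for $\Theta_{A_2}$ used in the paper's own proof, whose completing-the-square vector is integral only for $m_1\equiv m_2\pmod 3$.) You should therefore restrict the elliptic law for $\mathcal{T}$ to $\b m\in2\Z^2$ --- which is all that is ever used downstream, e.g.\ in the index-zero lemma of Section 6 --- or else work with the full vector of theta components of the lattice. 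As written, your completing-the-square step would fail exactly where the stated claim itself fails.
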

\begin{proof}
Using \eqref{TranE} and \eqref{TranM}, we obtain the claims for $f$. 

To prove \eqref{Tmod}, we use Proposition 3.8 of \cite{KP}, which gives for $A=\left(\begin{smallmatrix}
a& b \\ c & d
\end{smallmatrix}\right)\in\Gamma_0(3)$, that

\begin{align*}
\Theta_{A_2}\left(\frac{\b{z}}{c\tau+d};\frac{a\tau+b}{c\tau+d}\right) =\left(\frac{-3}{d}\right)(c\tau+d) e^{\frac{2\pi i c}{3(c\tau+d)} Q^*(\b{z})}
\Theta_{A_2}(\b z;\tau).
\end{align*}
The elliptic transformation \eqref{Tel} follows from 
$$\Theta_{A_2}(\boldsymbol{z}+ \boldsymbol{m}\tau + \boldsymbol{\ell};\tau)=  q^{- \frac13 Q^*(\b{m})} \zeta_1^{-\frac12 (m_2+2m_1)} \zeta_2^{-\frac12 (m_1 +2 m_2)}\Theta_{A_2}(\boldsymbol{z};\tau),$$ which can be confirmed by a direct calculation.
\end{proof}


\section{Proof of Theorem \ref{TheoremCoeffJacobi}}\label{sec-proofSecion}
In this section, we prove an explicit version of the main result of this paper, Theorem \ref{TheoremCoeffJacobi}, using the Jacobi form $f$ defined in \eqref{f} in the case $p=2$.
\begin{theorem}\label{TheoremCoeffJacobiExplicit}
	For $p=2$, $\boldsymbol{r}\in\Z^2$, and $\lvert q\rvert<\lvert \zeta_j\lvert  <1, \lvert q\rvert<\lvert \zeta_1\zeta_2\lvert  <1$, $j \in \{1,2 \}$, we have
	\begin{align*}
	{\rm coeff}_{\left[\zeta_1^{r_1},\,\zeta_{2}^{r_2}\right]} F\left(\zeta_1,\zeta_2;q\right)=q^{2Q(\b r)} \frac{\eta(\tau)^5}{\eta(2\tau)}
	{\rm coeff}_{\left[\zeta_1^{2r_1-r_2},\,\zeta_{2}^{r_1+r_2}\right]} f\left(\b{z};\tau\right),
	\end{align*}
	where ${\rm coeff}_{[\zeta_1^{r_1},\,\zeta_2^{r_2}]}$ denotes the $r_1$-th Fourier coefficient in $z_1$ and the $r_2$-th Fourier coefficient in $z_2$.
\end{theorem}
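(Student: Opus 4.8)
The plan is to prove the coefficient identity by computing both families of Fourier coefficients explicitly and comparing them under the linear substitution $(s_1,s_2)=(2r_1-r_2,\,r_1+r_2)$. Fixing once and for all the annulus $|q|<|\zeta_j|,|\zeta_1\zeta_2|<1$ is what makes this meaningful: it selects a single Laurent expansion both of the meromorphic function $f$ and of the rational summand of $F$, so that ${\rm coeff}_{[\zeta_1^{a},\zeta_2^{b}]}$ is unambiguous on each side. The structural principle guiding the comparison is that the denominator $(1-\zeta_1^{-1})(1-\zeta_2^{-1})(1-\zeta_1^{-1}\zeta_2^{-1})$ of $F$ together with its six-term numerator is the (finite) $\mathfrak{sl}_3$ Weyl numerator/denominator pair, whereas the denominator $\vartheta(z_1;\tau)\vartheta(z_2;\tau)\vartheta(z_1+z_2;\tau)$ of $f$ is the affine counterpart; the factors $q^{2Q(\b r)}$ and $\eta(\tau)^5/\eta(2\tau)$ and the reindexing by the determinant-$3$ matrix $\left(\begin{smallmatrix}2&-1\\1&1\end{smallmatrix}\right)$ should all arise from passing between these two incarnations of the same $A_2$ root data (the congruence $s_1+s_2\equiv0\pmod 3$ cutting out the image of the substitution is exactly the root-lattice-in-weight-lattice index).

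First I would compute the left-hand side. Because the rational summand of $F$ is in fact a Laurent polynomial, its coefficient of $\zeta_1^{r_1}\zeta_2^{r_2}$ is obtained by expanding each factor $(1-\zeta^{-1})^{-1}=-\sum_{k\geq1}\zeta^{k}$ in the annulus, multiplying out, and pairing the result against the six monomials of the numerator. The $\mathfrak{sl}_3$ Weyl symmetry then makes all but a one-sided part of the sum collapse, leaving
\[
{\rm coeff}_{[\zeta_1^{r_1},\zeta_2^{r_2}]}F=\sum_{\b n\in\Z^2}c_{r_1,r_2}(\b n)\,q^{2Q\left(\b n-\frac12\b 1\right)},
\]
where $\b 1:=(1,1)$ and each weight $c_{r_1,r_2}(\b n)$ is an explicit sign pattern (a $\Z$-combination of $\sgn$-values). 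I would record this as a preliminary lemma (the analogue of Proposition \ref{ct}), identifying the right-hand side with a finite combination of the rank-two false theta functions $\mathbb G_{(\lambda_1,\lambda_2)}$ attached to $(r_1,r_2)$; this part is essentially the rank-two generalization of the constant-term computation of \cite{BM} and should be routine.

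For the right-hand side I would \emph{not} expand $1/\vartheta(z_j;\tau)$ naively (in this annulus that produces the $q$-hypergeometric series of Section 5, which is the wrong shape for a direct match and is better kept for the proof of Theorem \ref{thm:elegant identity}). Instead I would compute ${\rm coeff}_{[\zeta_1^{s_1},\zeta_2^{s_2}]}f$ by the residue method for Fourier coefficients of meromorphic Jacobi forms (cf. \cite{BCR,BRZ}): write the $z_1$-coefficient as a line integral over a segment of length one, relate it via the elliptic transformation of Proposition \ref{prop-index} to the integral over the shifted segment $z_1\mapsto z_1+\tau$, and collect the residues of $f$ at the poles coming from the zeros of $\vartheta(z_1;\tau)$ and $\vartheta(z_1+z_2;\tau)$. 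The one-sided ranges dictated by the pole locations force each residue to contribute a one-sided geometric series in $q$, i.e. a $\sgn$-factor, so that ${\rm coeff}_{[\zeta_1^{s_1},\zeta_2^{s_2}]}f$ emerges again as a sign-weighted lattice sum; the $\zeta$-independent residue prefactors (ratios involving $\vartheta(\,\cdot\,;2\tau)$ and the derivative $\vartheta'(\,\cdot\,;\tau)$) collapse, via the Jacobi triple product, into exactly $\eta(2\tau)/\eta(\tau)^5$, and completing the square produces the Gaussian prefactor $q^{2Q(\b r)}$ once $(s_1,s_2)=(2r_1-r_2,r_1+r_2)$ is inserted.

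The final step is to check that the two sign-weighted sums coincide: after the substitution $(s_1,s_2)=(2r_1-r_2,r_1+r_2)$ and the change of summation variables it induces on the lattice, both the $q$-exponents and the $\sgn$-data must agree. I expect this reconciliation to be the main obstacle: matching the combinatorics of the weights $c_{r_1,r_2}(\b n)$ produced on the $F$-side with those produced by the residues on the $f$-side, while simultaneously tracking $q^{2Q(\b r)}$ and the eta-quotient through the determinant-$3$ reindexing, is delicate. The $F$-side is essentially bookkeeping, but the residue computation of the negative-index meromorphic Jacobi form $f$ and its careful alignment with the false theta function is the genuinely hard part.
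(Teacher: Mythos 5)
Your overall strategy is sound and, on the $F$-side, coincides with what the paper does: expanding the Weyl denominator in the chosen region and pairing against the six numerator monomials is exactly Proposition \ref{ct}, which expresses ${\rm coeff}_{[\zeta_1^{r_1},\zeta_2^{r_2}]}F$ as $\mathbb{G}_{\b r}$, a sign-weighted lattice sum of the shape you predict (the paper actually expands in $|\zeta_k|>1$ and invokes the Laurent-polynomial property to transfer to the stated annulus, but that is immaterial). Where you genuinely diverge is the $f$-side. The paper does not use the contour-shifting/residue method of \cite{BCR,BRZ}; instead it first rewrites $\vartheta(z;2\tau)/\vartheta(z;\tau)$ as an elementary factor times $1/\vartheta(z+\tau;2\tau)$, and then applies two closed-form expansions valid in the annulus --- the Jordan--Kronecker identity $\frac{-i\eta(\tau)^3\vartheta(z_1+z_2;\tau)}{\vartheta(z_1;\tau)\vartheta(z_2;\tau)}=\sum_{\b n\in\Z^2}\varrho_{n_1,n_2}q^{n_1n_2}\zeta_1^{n_1}\zeta_2^{n_2}$ and the partial-fraction expansion of $-i\zeta^{-1/2}\eta(\tau)^3/\vartheta(z;\tau)$ --- so the sign-weighted bilateral sum of Lemma \ref{JacobiExplicit} drops out by pure series manipulation, with no contour integrals and no residue bookkeeping. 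Your residue route should in principle reproduce the same answer (the $\varrho$-factors are precisely the wall-crossing data), but in rank two it is substantially harder to execute: you must iterate the contour argument in $z_2$ after taking $z_1$-residues, and you must account for the fact that $\vartheta(z_1;2\tau)$ in the numerator cancels half the apparent poles of $1/\vartheta(z_1;\tau)$, leaving poles only at $z_1\equiv\tau\pmod{\Z+2\tau\Z}$ (and similarly for $z_1+z_2$). Finally, the reconciliation step you correctly flag as the main obstacle is, in the paper, Proposition \ref{ConjG}: it rests on the rewriting of $\mathbb{G}_{\b\lambda}$ in Lemma \ref{la:rewriteG}, a chain of summation shifts exploiting $Q(\b m)=Q(m_1-m_2,-m_2)=Q(-m_1,m_2-m_1)$, and the vanishing of the bilateral sum $\sum_{n_1\in\Z}(-1)^{n_1}q^{\frac{n_1(n_1+1)}{2}+(n_2+r_1)n_1}=0$, which converts one-sided partial theta sums into the $\varrho$-weighted bilateral form; this identity is the concrete mechanism you would still need to supply to close the comparison, whichever method you use on the $f$-side.
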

The proof consists of several steps. We first determine an explicit expression for the coefficients of $F$ and then compare them with the coefficients of $f$.

\subsection{Coefficients of $F(\zeta_1,\zeta_2;q)$}\label{sec-coeffF}

In this section we compute the Fourier coefficients of $F(\zeta_1,\zeta_2;q)$ for general $p$. For this we require, for ${\b\lambda}\in \mathbb{Q}^2$,
\begin{multline*}
\mathbb{G}_{\boldsymbol{\lambda}}(\tau):=\sum_{\b{n} \in \N^2} {\rm min}(n_1,n_2)\,q^{p Q\left(\b n+\boldsymbol{\lambda}- \left(\frac1p, \frac1p\right)\right)}   \\
\times \left(1-q^{2\left(n_1+\lambda_1\right)-\left(n_2+\lambda_2\right)}-q^{2\left(n_2+\lambda_2\right)-\left(n_1+\lambda_1\right)}+q^{3\left(n_1+\lambda_1\right)}+q^{3\left(n_2+\lambda_2\right)}-q^{2\left(n_1+\lambda_1\right)+2\left(n_2+\lambda_2\right)}\right).\notag
\end{multline*}
The following result resembles  \cite[Proposition 5.1]{BM}.
\begin{proposition} \label{ct}
 For all $\b r \in \mathbb{Z}^2$, we have 
\[
{\rm coeff}_{\left[\zeta_1^{r_1},\,\zeta_{2}^{r_2}\right]} F(\zeta_1,\zeta_2;q)=\mathbb{G}_{\boldsymbol{r}}(\tau).
\]
\end{proposition}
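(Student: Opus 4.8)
The goal is to extract the $(r_1,r_2)$-Fourier coefficient of $F(\zeta_1,\zeta_2;q)$ and show it equals $\mathbb{G}_{\boldsymbol r}(\tau)$. The plan is to work directly from the definition of $F$, treating the rational prefactor
$\left(1-\zeta_1^{-1}\right)^{-1}\left(1-\zeta_2^{-1}\right)^{-1}\left(1-\zeta_1^{-1}\zeta_2^{-1}\right)^{-1}$
as a generating series in the stated annular region $|q|<|\zeta_j|,|\zeta_1\zeta_2|<1$. In that region each factor admits a geometric expansion $\left(1-\zeta_1^{-1}\right)^{-1}=\sum_{k\ge 0}\zeta_1^{-k}$, and similarly for the other two factors, so the prefactor becomes a triple sum over $(k_1,k_2,k_3)\in\N_0^3$ producing a monomial $\zeta_1^{-k_1-k_3}\zeta_2^{-k_2-k_3}$. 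Multiplying this by the six explicit monomials in the numerator (coming with signs $+,-,-,+,+,-$) and by the Gaussian $q^{pQ(\b n+\boldsymbol\lambda-(1/p,1/p))}$ with $\boldsymbol\lambda$ the relevant shift, I would read off the coefficient of $\zeta_1^{r_1}\zeta_2^{r_2}$ by imposing the two linear exponent conditions on $(n_1,n_2,k_1,k_2,k_3)$ for each of the six terms. This is the core bookkeeping step.

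Next I would perform the lattice sum over the auxiliary variables. For each of the six monomials the two coefficient equations fix two linear combinations of the $k_i$ in terms of $n_1,n_2,r_1,r_2$; summing the remaining free nonnegative variable yields, in each case, a factor that collapses into the $\mathrm{min}(n_1,n_2)$ weight together with one of the six $q$-power terms appearing in $\mathbb{G}_{\boldsymbol\lambda}$. Concretely, the geometric summation over a single index $k\ge 0$ subject to $k\le \min(\cdots)$ produces exactly the counting factor $\mathrm{min}(n_1,n_2)$ after the six contributions are combined, and the shifts $n_j\mapsto n_j+\lambda_j$ with $\boldsymbol\lambda=\boldsymbol r$ reproduce the arguments $2(n_1+\lambda_1)-(n_2+\lambda_2)$, $3(n_1+\lambda_1)$, $2(n_1+\lambda_1)+2(n_2+\lambda_2)$, and their symmetric counterparts. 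I expect the symmetry $n_1\leftrightarrow n_2$ (equivalently $\zeta_1\leftrightarrow\zeta_2$), under which $Q$ and the six-term bracket are invariant, to organize the six monomials into three symmetric pairs and substantially cut the casework.

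The main obstacle will be the careful matching of indices and signs: one must verify that the six geometric contributions combine so that the $\mathrm{min}$-weight and the precise set of exponents in $\mathbb{G}_{\boldsymbol r}$ emerge, rather than some other linear combination, and in particular that cross terms and boundary terms ($k=0$ or equal indices) cancel correctly to leave the clean six-term expression with no extra pieces. A secondary technical point is justifying the rearrangement of the (absolutely convergent) multiple series in the prescribed annulus so that coefficient extraction commutes with the geometric expansions; this is routine given the constraints on $|\zeta_j|$ but should be noted. Since Proposition \ref{ct} is asserted to closely parallel \cite[Proposition 5.1]{BM}, I would lean on the analogous rank-one-style computation there as a template for organizing the index algebra, adapting it to the $A_2$ quadratic form $Q$ and the enlarged numerator.
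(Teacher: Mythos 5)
Your proposal follows essentially the same route as the paper: the triple geometric expansion of the Weyl denominator is exactly the paper's identity
$\frac{1}{(1-\zeta_1^{-1})(1-\zeta_2^{-1})(1-\zeta_1^{-1}\zeta_2^{-1})}=\sum_{\b{\ell}\in\N_0^2}\min(\ell_1+1,\ell_2+1)\,\zeta_1^{-\ell_1}\zeta_2^{-\ell_2}$
(the number of solutions of $(k_1+k_3,k_2+k_3)=(\ell_1,\ell_2)$ in $\N_0^3$ being $\min(\ell_1,\ell_2)+1$), and the paper then likewise extracts the coefficient of $\zeta_1^{r_1}\zeta_2^{r_2}$ from each of the six monomials separately via linear index shifts, carrying out two representative cases explicitly. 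The one point to correct is the region of validity: the expansion $(1-\zeta_1^{-1})^{-1}=\sum_{k\ge 0}\zeta_1^{-k}$ in non-positive powers converges for $|\zeta_1|>1$, not in the annulus $|q|<|\zeta_j|<1$ that you invoke (there the convergent expansion is $-\sum_{k\ge 1}\zeta_1^{k}$). The paper performs the computation for $|\zeta_k|>1$ and then observes that the summand of $F(\zeta_1,\zeta_2;q)$ is in fact a Laurent polynomial in $\zeta_1,\zeta_2$, so the coefficient identity is unambiguous and extends to all $\zeta_k$; you should either add this observation or simply carry out your expansion in the region $|\zeta_k|>1$.
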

\begin{proof}
Denoting the summands (without the minus-factors) appearing in the definition of $F(\zeta_1,\zeta_2;q)$ by $F_j(\zeta_1,\zeta_2;q)$ and the summands 
of $\mathbb{G}_{\b r}(\tau)$ by $\mathbb{G}_{\b r,j}(\tau)$. We claim that
\begin{equation}\label{claim}
{\rm coeff}_{\left[\zeta_1^{r_1},\,\zeta_2^{r_2}\right]} F_j(\zeta_1,\zeta_2;q)=\mathbb{G}_{\b r,j}(\tau).
\end{equation}
We prove \eqref{claim} only for  $j\in\{1,2\}$, the other cases are shown analogously. 
In fact, we only show \eqref{claim} for $|\zeta_k|>1$ ($k\in\{1,2\}$).
Since the rational function in $\zeta_1$ and $\zeta_2$ showing up as the summand in $F(\zeta_1,\zeta_2;q)$ is actually a Laurent polynomial, the statement of Proposition\ \ref{ct} then holds for all $\zeta_k$.
For this we expand the Weyl denominator in $F(\zeta_1,\zeta_2;q)$ in non-positive powers of $\zeta_1$ and $\zeta_2$
\begin{equation} \label{weyl-sl3}
\frac{1}{\left(1-\zeta_1^{-1}\right)\left(1-\zeta_2^{-1}\right)\left(1-\zeta_1^{-1}\zeta_2^{-1}\right)}=\sum_{\b \ell\in\N_0^2} {\rm min}(\ell_1+1,\ell_2+1) \zeta_1^{-\ell_1} \zeta_2^{-\ell_2}.
\end{equation} 

We start with $j=1$. We have, using \eqref{weyl-sl3},  
\begin{align*}
&{\rm coeff}_{\left[\zeta_1^{r_1},\,\zeta_2^{r_2}\right]} F_1(\zeta_1,\zeta_2;q) \\
&= {\rm coeff}_{\left[\zeta_1^{r_1},\,\zeta_2^{r_2}\right]} \sum_{\b \ell \in\N_0^2} {\rm min}(\ell_1+1,\ell_2+1) \sum_{\b n \in \Z^2} {q^{{pQ\left(\b n-\left(\frac1p,\frac1p\right) \right)}}}  \zeta_1^{n_1-\ell_1-1} \zeta_2^{n_2-\ell_2-1}\\
& = \sum_{\b \ell \in \N_0^2} {\rm min}(\ell_1+1,\ell_2+1)  {q^{ pQ\left(\b \ell+\b r+\left(1-\frac1p,1-\frac1p\right)\right) }} = \sum_{\b \ell \in \N^2} {\rm min}(\ell_1,\ell_2)  {q^{pQ\left(\b \ell+\b r-\left(\frac1p,\frac1p\right)\right)}}=\mathbb{G}_{\boldsymbol{r},1}(\tau), 
\end{align*}
shifting $\b \ell\mapsto\b \ell - (1,1)$ for the second to last equality.

For $j=2$, we have 
\begin{align*}
&{\rm coeff}_{\left[\zeta_1^{r_1},\,\zeta_2^{r_2}\right]} F_2(\zeta_1,\zeta_2;q)={\rm coeff}_{\left[\zeta_1^{r_1},\,\zeta_2^{r_2}\right]} \sum_{\b n \in \Z^2} \frac{q^{pQ\left(\b n-\left(\frac1p,\frac1p\right)\right)}}{\left(1-\zeta_1^{-1}\right)\left(1-\zeta_2^{-1}\right)\left(1-\zeta_1^{-1}\zeta_2^{-1}\right)} \zeta_1^{-n_1+n_2-1} \zeta_2^{n_2-1} \\
& = {\rm coeff}_{\left[\zeta_1^{r_1},\,\zeta_2^{r_2}\right]} \sum_{\b \ell \in\N_0^2} {\rm min}(\ell_1+1,\ell_2+1) \sum_{\b n \in \Z^2} {q^{pQ\left(\b n-\left(\frac1p,\frac1p\right)\right)}}  \zeta_1^{-n_1+n_2-r_1-1} \zeta_2^{n_2-r_2-1} \\
&= {\rm coeff}_{\left[\zeta_1^{r_1},\, \zeta_2^{r_2}\right]} \sum_{\b \ell \in\N_0^2} {\rm min}(\ell_1+1,\ell_2+1)\sum_{\b n \in \Z^2} {q^{pQ\left(n_2-n_1-\frac1p,\,n_2-\frac1p\right)}}  \zeta_1^{n_1-\ell_1-1} \zeta_2^{n_2-\ell_2-1} \\
&= \sum_{\b \ell \in \N_0^2} {\rm min}(\ell_1+1,\ell_2+1)  {q^{pQ\left(\ell_2-\ell_1+r_2-r_1-\frac1p,\,\ell_2+r_2+1-\frac1p\right)}}=\mathbb{G}_{\boldsymbol{r},2}(\tau),
\end{align*}
where we change $n_1\mapsto -n_1+n_2$ for the third equality and
\begin{multline*}
pQ\left(\ell_2-\ell_1+r_2-r_1-\frac1p,\ell_2+r_2+1-\frac1p\right) \\
= 2(\ell_1+1+r_1)-(\ell_2+1+r_2)+pQ\left(\b \ell+\b r + \left(1-\frac1p,1-\frac1p\right)\right)
\end{multline*}
for the final equality.
\end{proof}

\begin{remark}
The first sum  appearing in the definition of $\mathbb{G}_{\mathbb{\boldsymbol{\lambda}}}$, namely 
$$\sum_{\b{n} \in \N_0^2} {\rm min}(n_1,n_2)q^{pQ\left(\b n+ \b \lambda - \left(\frac1p,\frac1p\right)\right)},$$
is an example of Kostant's partial theta function of type $A_2$ (see \cite[Section 3]{CM}). 
\end{remark}

\subsection{Partial theta functions as Fourier coefficients of Jacobi forms}\label{sec-JacobiCoeffs}

From now on until the end of this section, we assume that $p=2$ and
write $\mathbb G_{\boldsymbol{\lambda}}$ as Fourier coefficients of the Jacobi form $f$. 
\begin{proposition} \label{ConjG}
For $p=2$ and for all $\b r \in \mathbb{Z}^2$ and $|q| < |\zeta_j|<|q|^{-1}$, $|q|<|\zeta_1 \zeta_2 | < |q|^{-1}$, $j \in \{1,2 \}$, we have 
$$q^{-\frac23Q(\b r)} \mathbb{G}_{\frac13(r_1+r_2,\,2r_2-r_1)}(\tau) =   \frac{\eta(\tau)^5}{\eta(2 \tau)}\,\textnormal{coeff}_{\left[\zeta_1^{r_1},\,\zeta_2^{r_2}\right]}  f(\b z;\tau).$$

\end{proposition}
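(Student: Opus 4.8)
The plan is to compute the Fourier coefficient of $f$ on the right-hand side directly and to recognise it as the false theta function $\mathbb G$. First I would insert the Jacobi triple product into the six theta factors of \eqref{f} and simplify. Writing $(q;q)_\infty=(q;q^2)_\infty(q^2;q^2)_\infty$ throughout, the factor $1-e^{2\pi iw}$ produced for each $w\in\{z_1,z_2,z_1+z_2\}$ by the numerator theta functions (at $2\tau$) cancels the identical factor from the denominator theta functions (at $\tau$); after multiplying by $\eta(\tau)^5/\eta(2\tau)$ this leaves the clean product
\[
\frac{\eta(\tau)^5}{\eta(2\tau)}\,f(\b z;\tau)=q^{\frac12}(q;q)_\infty^2(q^2;q^2)_\infty^2\prod_{w\in\{z_1,\,z_2,\,z_1+z_2\}}\frac{1}{\left(e^{2\pi iw}q;q^2\right)_\infty\left(e^{-2\pi iw}q;q^2\right)_\infty}.
\]
In particular the only poles in the $z_j$ sit at $\zeta_j,\zeta_1\zeta_2\in\{q^{\pm1},q^{\pm3},\dots\}$, so the stated region $|q|<|\zeta_j|,|\zeta_1\zeta_2|<|q|^{-1}$ is exactly the pole-free tube around $|\zeta_j|=1$ and the Fourier coefficient is well defined there. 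Equivalently, using the Jacobi triple product $\sum_n(-1)^nq^{n^2}\xi^n=(q^2;q^2)_\infty(\xi q;q^2)_\infty(\xi^{-1}q;q^2)_\infty$, one may write $\frac{\eta(\tau)^5}{\eta(2\tau)}f$ as a constant (depending only on $\tau$) times the reciprocal of a product of three theta functions in $\zeta_1,\zeta_2,\zeta_1\zeta_2$; this exhibits $f$ as a meromorphic Jacobi form of negative index, whose Fourier coefficients in such a tube are governed by false/partial theta functions.

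Next I would extract $\operatorname{coeff}_{[\zeta_1^{r_1},\zeta_2^{r_2}]}$ in this tube by the strategy used for the positive case in Proposition \ref{ct} (compare also the proof of \cite[Proposition 5.1]{BM}), now applied to $f$. Expanding the numerator theta functions at $2\tau$ as a theta series supplies the Gaussian $q^{2Q(\cdots)}$ together with the six-term signed combination: these six terms are the images under the Weyl group $S_3$ of the root system $A_2$, matching exactly the six summands in the definition of $\mathbb G$. The reciprocal of the remaining theta product then contributes the weight $\min(n_1,n_2)$, playing here the role that the Weyl denominator expansion \eqref{weyl-sl3} plays in the proof of Proposition \ref{ct}; the essential difference is that $f$ is \emph{not} a Laurent polynomial over the Weyl denominator, so this step is genuinely a negative-index meromorphic Jacobi coefficient computation and the $\min$-weight and broken signs must be produced by a one-sided resummation across the poles rather than read off directly.

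Finally I would carry out the bookkeeping that converts the raw multi-sum into $\mathbb G_{\frac13(r_1+r_2,\,2r_2-r_1)}$: solving the two linear constraints imposed by reading off the coefficient of $\zeta_1^{r_1}\zeta_2^{r_2}$, shifting the summation variables into $\N^2$, and completing the square. The prefactor $q^{-\frac23Q(\b r)}$ and the fractional shift $\frac13(r_1+r_2,2r_2-r_1)$ emerge here; the linear map $\b r\mapsto\frac13(r_1+r_2,2r_2-r_1)$ is dual to the $A_2$-framing $(z_1,z_2)\mapsto(z_1+2z_2,z_1-z_2)$ entering the definition of $\mathcal T$, which is also responsible for the index change in Theorem \ref{TheoremCoeffJacobiExplicit}. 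I expect the main obstacle to be precisely this coefficient extraction for the reciprocal of three theta functions in the pole-straddling region: one must control the resummation so that the pieces assemble into exactly the six signed terms, with the correct $\min(n_1,n_2)$ weight and the correct half-integral shifts, rather than into the equal but differently organised $q$-hypergeometric multisum obtained by expanding the clean product above as a power series — the latter being the form that is then exploited for the identities of Section \ref{sec-proofSecion} and beyond.
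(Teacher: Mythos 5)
There is a genuine gap. Your setup is sound and your product formula
$\frac{\eta(\tau)^5}{\eta(2\tau)}f(\b z;\tau)=q^{\frac12}(q;q)_\infty^2(q^2;q^2)_\infty^2\prod_{w}\bigl(e^{2\pi iw}q,e^{-2\pi iw}q;q^2\bigr)_\infty^{-1}$
is correct (it is essentially \eqref{thetafrac}, and the paper exploits it in Proposition \ref{prop}), as is your identification of the pole-free tube. But the step you yourself flag as ``the main obstacle'' --- producing the one-sided sums and broken signs from the reciprocal of three theta functions --- is exactly the content of the proposition, and you give no mechanism for it. The paper's mechanism is concrete: it first uses $\frac{\vartheta(z;2\tau)}{\vartheta(z;\tau)}=-i\zeta^{-\frac12}q^{-\frac14}\frac{\eta(2\tau)^2}{\eta(\tau)\vartheta(z+\tau;2\tau)}$ twice to reduce the sextuple quotient to $\frac{1}{\vartheta(z_1;\tau)}$ times a Jordan--Kronecker-type quotient $\frac{\eta(2\tau)^3\vartheta(z_1;2\tau)}{\vartheta(z_2+\tau;2\tau)\vartheta(z_1+z_2+\tau;2\tau)}$, and then expands each factor by a known bilateral series: Andrews' partial fraction decomposition $\frac{-i\zeta^{-1/2}\eta(\tau)^3}{\vartheta(z;\tau)}=\sum_{n}\frac{(-1)^nq^{n(n+1)/2}}{1-\zeta q^n}$ and the Jordan--Kronecker identity \eqref{qid}. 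These two inputs are what generate the $\varrho_{n_1,n_2}=\frac12(\sgn^*(n_1)+\sgn^*(n_2))$ weights, i.e.\ the ``one-sided resummation'' you postulate; without them (or an equivalent Appell--Lerch device) the coefficient extraction does not get off the ground.

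A second, more structural problem: you plan to match the coefficient of $f$ against the six-term, $\min(n_1,n_2)$-weighted definition of $\mathbb G$, with the numerator thetas supplying the six Weyl-signed terms and the denominator reciprocal supplying the $\min$ weight. That is not how the two sides meet. The reciprocal of $\vartheta(z_1;\tau)\vartheta(z_2;\tau)\vartheta(z_1+z_2;\tau)$ is not the finite Weyl denominator and does not admit the expansion \eqref{weyl-sl3} with $\min(\ell_1+1,\ell_2+1)$ coefficients; and in the paper's final matched expression the $\min$ weight is gone entirely --- the coefficient of $f$ is $\sum_{n_1\ge0,\,n_2\in\Z}\varrho_{n_2,n_2+r_2}(-1)^{n_1}q^{\cdots}$ (Lemma \ref{JacobiExplicit}). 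The paper therefore also has to work on the \emph{left}-hand side: Lemma \ref{la:rewriteG} telescopes the six $\kappa$-signed terms so that the $\min$ weight disappears, and a further chain of substitutions using $Q(\b m)=Q(m_1-m_2,-m_2)=Q(-m_1,m_2-m_1)$, together with the vanishing of $\sum_{n_1\in\Z}(-1)^{n_1}q^{n_1(n_1+1)/2+(n_2+r_1)n_1}$, converts $q^{-\frac23Q(\b r)}\mathbb G_{\frac13(r_1+r_2,2r_2-r_1)}$ into the identical double sum. Your ``bookkeeping'' paragraph would need to contain all of this; as written, both halves of the argument are missing their key identities.
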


To prove Proposition \ref{ConjG}, we first rewrite $\mathbb G_{\boldsymbol{\lambda}}$.
\begin{lemma}\label{la:rewriteG}
	We have, for $p=2$ and $\b\lambda\in\Q^2$,
	\begin{align*}
	\mathbb{G}_{\b{\lambda}}(\tau)=
	\sum_{\b n\in\N_0^2}  q^{2Q\left(\b{n}+\b{\lambda}+\left(\frac12,\frac12\right)\right)}
	-\sum_{n_2>n_1\geq 0} q^{2Q\left(\b{n}+\b{\lambda}+\left(\frac12,\, 0\right)\right)}
	-\sum_{n_1>n_2\geq 0} q^{2Q\left(\b{n}+\b{\lambda}+\left(0,\frac12\right)\right)}.
	\end{align*}
\end{lemma}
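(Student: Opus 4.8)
The plan is to substitute $p=2$ into the definition of $\mathbb{G}_{\b\lambda}$ and to absorb each of the six monomial prefactors into the quadratic form by completing the square. Writing $P_{\b s}(\b n):=q^{2Q(\b n+\b\lambda+\b s)}$ for a shift $\b s\in\frac12\Z^2$, a direct computation shows that multiplying the base Gaussian $q^{2Q(\b n+\b\lambda-(\frac12,\frac12))}$ by the six monomials $1,\,-q^{2(n_1+\lambda_1)-(n_2+\lambda_2)},\ldots$ produces, respectively,
\[
P_{\left(-\frac12,-\frac12\right)},\quad -P_{\left(0,-\frac12\right)},\quad -P_{\left(-\frac12,0\right)},\quad P_{\left(\frac12,0\right)},\quad P_{\left(0,\frac12\right)},\quad -P_{\left(\frac12,\frac12\right)}.
\]
Hence I would rewrite
\[
\mathbb{G}_{\b\lambda}=\sum_{\b n\in\N^2}\min(n_1,n_2)\left[\left(P_{\left(-\frac12,-\frac12\right)}-P_{\left(\frac12,\frac12\right)}\right)+\left(P_{\left(\frac12,0\right)}-P_{\left(-\frac12,0\right)}\right)+\left(P_{\left(0,\frac12\right)}-P_{\left(0,-\frac12\right)}\right)\right](\b n).
\]
The crucial structural observation is that in each of these three differences the two shifts differ by an \emph{integral} vector ($(1,1)$, $(1,0)$, $(0,1)$, respectively), so that $P_{\b s+\b v}(\b n)=P_{\b s}(\b n+\b v)$; this is exactly what makes each difference telescope against the weight $\min(n_1,n_2)$.

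Next I would treat each difference separately by reindexing the shifted summand. For the diagonal difference, substituting $\b n\mapsto\b n-(1,1)$ in the $P_{(\frac12,\frac12)}$-sum and using $\min(n_1-1,n_2-1)=\min(n_1,n_2)-1$, the $\min$-weighted sums cancel except on the boundary $\min(n_1,n_2)=1$, which together with the leftover $+1$ contribution recombines to give precisely $\sum_{\b n\in\N^2}P_{(-\frac12,-\frac12)}(\b n)=\sum_{\b n\in\N_0^2}q^{2Q(\b n+\b\lambda+(\frac12,\frac12))}$, the first term on the right. For the difference built from $(\frac12,0)$ and $(-\frac12,0)$, the analogous reindexing only in $n_1$ produces the discrete weight $\min(n_1-1,n_2)-\min(n_1,n_2)$, which equals $-1$ when $n_2\geq n_1$ and $0$ otherwise; summing the surviving terms and shifting back yields $-\sum_{n_2>n_1\geq0}q^{2Q(\b n+\b\lambda+(\frac12,0))}$, the second term. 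The third difference is identical after interchanging $n_1\leftrightarrow n_2$ and $\lambda_1\leftrightarrow\lambda_2$, giving the final term.

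The main obstacle is the bookkeeping in the telescoping step: one must track the summation ranges carefully and verify the piecewise behaviour of the discrete ``derivative'' of the weight, including the boundary contributions at $n_1=1$ or $n_2=1$, and confirm that the single leftover term coming from the identity $\min(n_1,n_2)-1$ exactly fills in the missing boundary of the quadrant sum on the right-hand side. Since both $Q$ and $\min$ are symmetric under $n_1\leftrightarrow n_2$, only two of the three telescoping computations are genuinely independent, which roughly halves the work. No convergence subtleties arise, since for fixed $q$ with $\lvert q\rvert<1$ all the sums involved are over shifts of a positive-definite Gaussian and converge absolutely, so all rearrangements are justified.
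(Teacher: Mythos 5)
Your proposal is correct and follows essentially the same route as the paper: the six monomial prefactors are absorbed into shifted Gaussians $q^{2Q(\b n+\b\lambda+\b\alpha)}$ with the same six shifts and signs, the terms are paired in the same three ways (shifts differing by $(1,1)$, $(1,0)$, $(0,1)$), and each pair is telescoped against the weight $\min(n_1,n_2)$ exactly as in the paper's shifts $\b n\mapsto\b n+(1,1)$ etc. The bookkeeping you flag (boundary terms and the piecewise values of the discrete differences of $\min$) works out as you describe, so no gap remains.
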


\begin{proof}
	It is not hard to see that
	\begin{align}
	\mathbb{G}_{\boldsymbol{\lambda}}(\tau)=
	\sum_{\b{\alpha}\in \mathscr{T}} \kappa(\b{\alpha})
	\sum_{\b n\in\N^2} {\rm min}(n_1,n_2)\,q^{2Q\left(\b{n}+\b{\lambda}+\b{\alpha}\right)},
	\label{Gshape1}
	\end{align}
	with 
	\begin{align*}
	\mathscr{T}&:= \left\{  \left(-\tfrac12,-\tfrac12 \right),\left(0,-\tfrac12 \right)
	,\left(-\tfrac12,0 \right),\left(\tfrac12,0 \right),\left(0,\tfrac12 \right),\left(\tfrac12,\tfrac12 \right)\right\}\\
	\kappa(\b{\alpha})&:=\begin{cases}
	1 \quad & \text{if }\b{\alpha}\in  \left\{\left(-\frac12,-\frac12 \right),\left(\frac12,0 \right),\left(0,\frac12 \right)\right\},\\
	-1 \quad & \text{if }\b{\alpha}\in \left\{\left(0,-\frac12 \right)
	,\left(-\frac12,0 \right),\left(\frac12,\frac12 \right)\right\}.
	\end{cases}
	\end{align*}
	 We combine terms in \eqref{Gshape1} suitably and shift $\b n\mapsto \b n+(1,1)$ in the $\b{\alpha}=(-\frac12,-\frac12)$ term to obtain
	\begin{equation*}
	\quad \sum_{\b{\alpha}\in \left\{\left(-\frac12,-\frac12 \right),\left(\frac12,\frac12 \right)\right\}} \kappa\left(\b{\alpha}\right)
	\sum_{\b n\in\N^2} \min\left(n_1,n_2\right) q^{2Q\left(\b{n}+\b{\lambda}+\b{\alpha}\right)}=
	\sum_{\b n\in\N_0^2}  q^{2Q\left(\b{n}+\b{\lambda}+\left(\frac12,\frac12\right)\right)}.
	\end{equation*}
	Similarly, we have
	\begin{align*}
	\sum_{\b{\alpha}\in \left\{\left(-\frac12,0 \right),\left(\frac12,0 \right)\right\}} \kappa\left(\b{\alpha}\right)
	\sum_{\b n\in\N^2} \min\left(n_1,n_2\right) q^{2Q\left(\b{n}+\b{\lambda}+\b{\alpha}\right)}&=
	-\sum_{n_2>n_1\geq 0} q^{2Q\left(\b{n}+\b{\lambda}+\left(\frac12,0\right)\right)},\\
	 \sum_{\b{\alpha}\in \left\{\left(0,-\frac12 \right),\left(0,\frac12 \right)\right\}} \kappa\left(\b{\alpha}\right)
	\sum_{\b n\in\N^2} \min\left(n_1,n_2\right) q^{2Q\left(\b{n}+\b{\lambda}+\b{\alpha}\right)}
	&=
	-\sum_{n_1>n_2\geq 0} q^{2Q\left(\b{n}+\b{\lambda}+\left(0,\frac12\right)\right)}.
	\end{align*}
	This finishes the proof of Lemma \ref{la:rewriteG}.
\end{proof}

We next rewrite the right-hand side of Proposition \ref{ConjG}. For this, we let
\begin{equation*}
\varrho_{n_1,n_2}:=\frac12(\sgn^*(n_1)+\sgn^*(n_2)).
\end{equation*}
\begin{lemma}\label{JacobiExplicit}
For $\b{r} \in \mathbb{Z}^2$ we have,	where $\zeta_j$ $(j\in\{1,2\})$ satisfy $\lvert q\rvert<\lvert \zeta_j\lvert  <1, \lvert q\rvert<\lvert \zeta_1\zeta_2\lvert  <1$ 
	\begin{align*}
	\frac{\eta(\tau)^5}{\eta(2 \tau)}\, {\rm coeff}_{\left[\zeta_1^{r_1},\, \zeta_2^{r_2}\right]}f(\b z;\tau) 
	=\sum_{\substack{n_1\geq 0 \\ n_2\in \Z}} \varrho_{n_2,n_2+r_2} (-1)^{n_1} q^{\frac{n_1(n_1+1)}{2}+n_1n_2+2n_2^2+r_1n_1+2r_2n_2+2n_2+r_2+\frac12}.
	\end{align*}
\end{lemma}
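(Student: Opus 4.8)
The plan is to compute the Fourier coefficient $\mathrm{coeff}_{[\zeta_1^{r_1},\zeta_2^{r_2}]}f(\b z;\tau)$ directly from the product expression for $f$ in \eqref{f}. First I would use the Jacobi triple product identity recalled at the start of Section \ref{sec-JacobiTheta} to write each of the six theta factors in the numerator and denominator of $f$ in product form. The three denominator factors $\vartheta(z_j;\tau)$ contribute geometric-series expansions $1/(1-\zeta^{\pm})$, while the three numerator factors $\vartheta(z_j;2\tau)$ contribute genuine theta series in $q^2$. After collecting the prefactors $q^{1/8}\zeta^{-1/2}$ and the $\eta$-quotients arising from the infinite products $(q;q)_\infty$ and $(q^2;q^2)_\infty$, the factor $\eta(\tau)^5/\eta(2\tau)$ on the left-hand side should exactly cancel the $q$-power and $\eta$-product prefactors, leaving a clean lattice sum times geometric series.

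Second, I would expand each denominator factor as a geometric series in the region $|q|<|\zeta_j|<1$, $|q|<|\zeta_1\zeta_2|<1$ prescribed in the hypothesis; this choice of region is exactly what makes the three expansions $1/(1-\zeta_1^{-1})$, $1/(1-\zeta_2^{-1})$, $1/(1-\zeta_1^{-1}\zeta_2^{-1})$ simultaneously convergent. This turns $f$ into a multiple sum over the lattice indices of the three numerator thetas together with the three summation variables of the geometric series. I then extract the coefficient of $\zeta_1^{r_1}\zeta_2^{r_2}$ by matching exponents; the geometric-series indices get solved in terms of the theta-lattice indices and $\b r$, and the constraint that those indices be nonnegative is precisely what produces the sign/indicator factor. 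The quantity $\varrho_{n_2,n_2+r_2}=\tfrac12(\sgn^*(n_2)+\sgn^*(n_2+r_2))$ should emerge as the combination counting how many of the two relevant half-lattice contributions survive, i.e. it is an averaged sign coming from pairing the two numerator theta series in $z_2$ (and $z_1+z_2$) against the geometric factors. The $(-1)^{n_1}$ and the quadratic exponent $\tfrac{n_1(n_1+1)}{2}+n_1n_2+2n_2^2+r_1n_1+2r_2n_2+2n_2+r_2+\tfrac12$ should fall out after completing the square in the theta exponents and relabeling.

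The main obstacle I expect is the bookkeeping in the coefficient extraction: there are three numerator theta sums and three geometric sums, so naively six free indices, and one must carefully solve the two exponent-matching equations (one for $\zeta_1$, one for $\zeta_2$) to reduce to the two surviving indices $n_1,n_2$ while correctly tracking the resulting inequality constraints. In particular, the appearance of the \emph{averaged} sign $\varrho_{n_2,n_2+r_2}$ rather than a single $\sgn^*$ is the delicate point: it arises because two of the three geometric factors share overlapping dependence on $z_2$, so the nonnegativity constraints on their indices combine into a symmetric condition on $n_2$ and $n_2+r_2$. I would verify this by isolating the $z_2$-dependence, writing the relevant double geometric sum, and checking that the lattice shift by $\b r$ splits the sum into the half-line $n_1\geq 0$ times the signed contribution in $n_2$.

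Finally I would reconcile the overall normalization: collecting the $q^{1/8}$ factors from the three denominator thetas (giving $q^{3/8}$) against the $q^{2\cdot 1/8}=q^{1/4}$ from the three numerator thetas in $q^2$, together with the $\tfrac{1}{24}$-exponents packaged in $\eta(\tau)^5/\eta(2\tau)$, should reproduce the constant term $+\tfrac12$ and the linear shift $+r_2$ in the exponent. This normalization check is routine but essential, since an error there would shift the entire exponent; I would confirm it by tracking the half-integer powers of $q$ and of $\zeta_j$ through the triple product identity, after which the stated identity follows.
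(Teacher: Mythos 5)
Your overall strategy---expand $f$ into a multiple series and extract the $\zeta_1^{r_1}\zeta_2^{r_2}$ coefficient by matching exponents---is the right shape, but the concrete plan has a genuine gap at its core: you treat each reciprocal theta $1/\vartheta(\,\cdot\,;\tau)$ as if it contributed a \emph{single} geometric factor $1/(1-\zeta^{\pm 1})$, so that only ``three geometric sums'' appear and two linear equations reduce six indices to two. That is not the case. By the triple product (equivalently \eqref{T2T}), each quotient $\vartheta(z;2\tau)/\vartheta(z;\tau)$ is, up to $\eta$-factors, the reciprocal of the \emph{infinite} product $(\zeta q,\zeta^{-1}q;q^2)_\infty$, whose Laurent expansion in $\zeta$ is not elementary: producing closed-form coefficients for such reciprocals is exactly the content of the nontrivial identities the paper invokes --- the Jordan--Kronecker identity \eqref{qid} and Andrews' partial fraction decomposition of $\eta(\tau)^3/\vartheta(z_1;\tau)$ --- neither of which appears in your outline. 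Without one of these (or Lemma \ref{lem:auxlem}, which leads instead to the $q$-hypergeometric form $G_{\b r}$ of Proposition \ref{prop}, i.e.\ to the \emph{other} side of Theorem \ref{thm:elegant identity}), the coefficient extraction cannot be completed in the stated closed form; naively expanding every factor $1/(1-\zeta q^n)$ produces infinitely many indices and a partition-type generating function, not a two-index false theta series.

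Your account of where $\varrho_{n_2,n_2+r_2}$ comes from is also not right. In the paper it does not arise from ``pairing the two numerator theta series in $z_2$''; it enters through the two-sided expansion \eqref{geometric} of a single factor $1/(1-\zeta q^n)$ in the annulus $|q|<|\zeta|<1$ (one expands in positive or negative powers according to the sign of $n$, and the average of the two signs is $\varrho$), and the final single factor $\varrho_{n_2,n_2+r_2}$ survives only after one of the two $\varrho$'s in the product $\varrho_{n_1,n_2+r_1}\varrho_{n_2+r_2,n_2}$ is eliminated using the vanishing of the bilateral sum $\sum_{n_1\in\Z}(-1)^{n_1}q^{n_1(n_1+1)/2+(n_2+r_1)n_1}=0$, which converts $\sum_{n_1\in\Z}\varrho_{n_1,n_2+r_1}(\cdots)$ into $\sum_{n_1\geq 0}(\cdots)$. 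The structural step you are missing is the preliminary identity $\vartheta(z;2\tau)/\vartheta(z;\tau)=-i\zeta^{-1/2}q^{-1/4}\eta(2\tau)^2/(\eta(\tau)\vartheta(z+\tau;2\tau))$ (a consequence of the product expansions and \eqref{TranE}), applied to the $z_2$ and $z_1+z_2$ factors: this regroups $f$ so that $\vartheta(z_1;2\tau)$ sits over the product $\vartheta(z_2+\tau;2\tau)\vartheta(z_1+z_2+\tau;2\tau)$, making the Jordan--Kronecker identity applicable in one shot, while $1/\vartheta(z_1;\tau)$ is handled separately by partial fractions. I recommend reworking the proof around these three identities; the normalization bookkeeping you describe in your last paragraph is then indeed routine.
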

\begin{proof}
Using the product expansion of $\vartheta$ we can easily verify that
\begin{align*}
\frac{\vartheta(z;2\tau)}{\vartheta(z;\tau)}=-i\zeta^{-\frac12} q^{-\frac14} \frac{\eta(2\tau)^2}{\eta(\tau)\vartheta(z+\tau;2\tau)}.
\end{align*}
Using this identity twice we find that the left-hand side of Lemma \ref{JacobiExplicit} equals 
\begin{equation*}
\ell(\tau):=q^{\frac12}
{\rm coeff}_{\left[\zeta_1^{r_1},\,\zeta_2^{r_2}\right]}
\frac{-i\zeta_1^{-\frac12}\eta(\tau)^3}{
	\vartheta(z_1; \tau)}\, h(\b{z};\tau),
\end{equation*}
where 
\begin{align*}
h(\b{z};\tau):=-\frac{i\eta( 2\tau)^3\zeta_2^{-1}q^{-1}\vartheta(z_1; 2\tau)}{
	\vartheta\left(z_2+\tau; 2\tau\right) \vartheta\left(z_1+z_2+\tau;2\tau\right)}.
\end{align*}

We next rewrite $h(\b z;\tau)$.
For this we recall an identity going back to  Jordan and Kronecker (which can be concluded from Theorem 3 of \cite{Za}), which holds for $\lvert q\rvert <\lvert \zeta_1 \rvert <1$,
\begin{equation}\label{qid}
\frac{-i\eta(\tau)^3\vartheta(z_1+z_2;\tau)}{\vartheta(z_1;\tau)\vartheta(z_2;\tau)} =\sum_{n\in\Z} \frac{\zeta_1^n}{1-\zeta_2 q^n}= \sum_{\b n\in\Z^2} \varrho_{n_1,n_2}\, q^{n_1n_2} \zeta_1^{n_1} \zeta_2^{n_2}.
\end{equation}
For the last equality, we use the geometric series expansion to find that, for $|q|<|\zeta_2|<1$,
\begin{equation}\label{geometric}
\frac1{1-\zeta_2q^n}= \sum_{n_1\in\Z} \varrho_{n_1,n}\, q^{n_1n}\zeta_2^{n_1}.
\end{equation}

From \eqref{TranE} and \eqref{qid} we have, for $\lvert q\rvert<\lvert \zeta_2\lvert  <1, \lvert q\rvert<\lvert \zeta_1\zeta_2\lvert  <1$,
$$ h(\b{z};\tau)= -\frac{i\eta(2\tau)^3 \vartheta(-z_1+2\tau;2\tau)}{\vartheta(z_2+\tau;2\tau)\vartheta(-z_1-z_2+\tau;2\tau)}= \sum_{\b{n}\in\Z^2}\varrho_{n_1,n_2}\, q^{2n_1n_2+n_1+n_2} \zeta_1^{-n_2}\zeta_2^{n_1-n_2}.$$

We also need the following partial fraction decomposition, which holds for $\lvert q\rvert <\lvert \zeta_1\rvert <1$ (see \cite[equation (2.1)]{An-1984}), using again \eqref{geometric},
\begin{equation*}
-\frac{i\zeta_1^{-\frac12}\eta(\tau)^3}{
	\vartheta(z_1; \tau)}=\sum_{n\in\Z}\frac{(-1)^nq^{\frac{n(n+1)}{2}}}{1-\zeta_1q^n}=\sum_{\boldsymbol{n}\in\Z^2} \varrho_{n_1,n_2} (-1)^{n_1} q^{\frac{n_1(n_1+1)}{2}+n_1n_2}\zeta_1^{n_2}.
\end{equation*}
Thus
\begin{align*}
\ell(\tau) &= q^{\frac12} \operatorname{coeff}_{\left[\zeta_1^{r_1},\,\zeta_2^{r_2}\right]}\sum_{\b{n}\in\Z^4} \varrho_{n_1,n_2}\varrho_{n_3,n_4} (-1)^{n_1} q^{\frac{n_1(n_1+1)}{2}+n_1n_2+2n_3n_4+n_3+n_4}\zeta_1^{n_2-n_4} \zeta_2^{n_3-n_4}\\
&=\sum_{\boldsymbol{n}\in\Z^2} \varrho_{n_1,n_2+r_1} \varrho_{n_2+r_2,n_2} (-1)^{n_1} q^{\frac{n_1(n_1+1)}{2}+n_1n_2+2n_2^2+r_1n_1+2r_2n_2+2n_2+r_2+\frac12}.
\end{align*}
Using that
\begin{equation*}
\sum_{n_1\in\Z} (-1)^{n_1} q^{\frac{n_1(n_1+1)}{2} + (n_2+r_1)n_1} = 0,
\end{equation*}
we may conclude that
\[\sum_{n_1\in\Z} \varrho_{n_1,n_2+r_1} (-1)^{n_1} q^{\frac{n_1(n_1+1)}{2}+\left(n_2+r_1\right)n_1} = \sum_{n_1\geq 0} (-1)^{n_1} q^{\frac{n_1(n_1+1)}{2}+(n_2+r_1)n_1}.\]
This then gives the claim of the lemma.
\end{proof}

We are now ready to prove Proposition \ref{ConjG}.

\begin{proof}[Proof of Proposition \ref{ConjG}]
We use the identity ($\b{m}\in\Q^2$)
\begin{align*}
	Q\left(\b{m}\right) &= Q\left(m_1-m_2,-m_2\right)=Q(-m_1,m_2-m_1),
\end{align*}
and Lemma \ref{la:rewriteG}, to rewrite 
	\begin{align*}
	\mathbb G_{\b \lambda}(\tau)&=
	\sum_{n_1\geq n_2\geq 0}  q^{2Q\left(\b{n}+\b{\lambda}+\left(\frac12,\frac12\right)\right)}
	+\sum_{n_2>n_1\geq 0}  q^{2Q\left(\b{n}+\b{\lambda}+\left(\frac12,\frac12\right)\right)}\\
	&\hspace{5.5cm}
	 -\sum_{n_2>n_1\geq 0} q^{2Q\left(\b{n}+\b{\lambda}+\left(\frac12,\,0\right)\right)}
	-\sum_{n_1>n_2\geq 0} q^{2Q\left(\b{n}+\b{\lambda}+\left(0,\,\frac12\right)\right)}\\
	&
	=\sum_{n_1\geq n_2\geq 0}  q^{
		2Q\left(n_1-n_2+\lambda_1-\lambda_2,-n_2-\lambda_2-\frac12\right)}
	+\sum_{n_2>n_1\geq 0}  q^{2Q\left(-n_1-\lambda_1-\frac12,\,n_2-n_1+\lambda_2-\lambda_1\right)}
	\\&\hspace{1cm}
	-\sum_{n_2>n_1\geq 0} q^{2Q\left(-n_1-\lambda_1-\frac12,\,n_2-n_1+\lambda_2-\lambda_1-\frac12\right)}
	-\sum_{n_1>n_2\geq 0} q^{2Q\left(n_1-n_2+\lambda_1-\lambda_2-\frac12,-n_2-\lambda_2-\frac12\right)}\\
	&=
	\sum_{\b n\in\N_0^2} \left(q^{2Q\left(n_1+\lambda_1-\lambda_2,-n_2-\lambda_2-\frac12\right)}+q^{2Q\left(-n_1-\lambda_1-\frac12 ,\, n_2+\lambda_2-\lambda_1+1\right)}\right.
	\\&\qquad \left. \hspace{3.5cm}-q^{2Q\left(-n_1 -\lambda_1-\frac12, \, n_2+\lambda_2-\lambda_1+\frac12\right)}-q^{2Q\left(n_1+\lambda_1-\lambda_2+\frac12,-n_2-\lambda_2-\frac12\right)}\right)
	\end{align*}
	shifting $n_1\mapsto n_1+n_2$ in the first sum, $n_2\mapsto n_1+n_2+1$ in the second and third sum, and $n_1\mapsto n_1+n_2+1$ in the final term. Combining the first and the last sum, and also the second and the third sum, we get
	\[\mathbb{G}_{\b{\lambda}}(\tau)= \sum_{\b n\in\N_0^2} (-1)^{n_1} q^{Q\left(\frac{n_1}{2}+\lambda_1-\lambda_2,-n_2-\lambda_2-\frac12\right)}-\sum_{\boldsymbol{n}\in \N_0^2} (-1)^{n_2} q^{Q\left(-n_1-\lambda_1-\frac12,\,\frac12(n_2+1)+\lambda_2-\lambda_1\right)}.\]
	We now assume that $\lambda_1+\lambda_2\in\Z$. Substituting  $\boldsymbol{n}\mapsto(-1-\lambda_1-\lambda_2-n_2,-1-n_1)$, we see that the second sum equals
	\[ \sum_{\substack{n_1<0 \\ n_2<-\lambda_1-\lambda_2}} (-1)^{n_1} q^{Q\left(n_2+\lambda_2+\frac12,-\frac{n_1}{2}+\lambda_2-\lambda_1\right)}.\]
	Using $Q(\b{m})=Q(-m_2,-m_1)$ we find that
\begin{equation*}
\mathbb{G}_{\b{\lambda}}(\tau)=\left(\sum_{\b n\in\N_0^2} + \sum_{\substack{n_1<0 \\ n_2 < -\lambda_1-\lambda_2}}\right) (-1)^{n_1} q^{Q\left(\frac{n_1}{2}+\lambda_1-\lambda_2,-n_2-\lambda_2-\frac12\right)}.
\end{equation*}
In particular, using this with  $\lambda_1=\frac13(r_1+r_2)$ and $\lambda_2=\frac13(2r_2-r_1)$ (note that $\lambda_1+\lambda_2=r_2\in\Z$ is satisfied) we obtain that
\begin{align*}
q^{-\frac23Q(\b r)}\, \mathbb{G}_{\frac13(r_1+r_2,2r_2-r_1)}(\tau)&=\left(\sum_{\b n\in\N_0^2 } + \sum_{\substack{n_1 < 0 \\ n_2 < -r_2}}\right) (-1)^{n_1} q^{\frac{n_1(n_1+1)}{2} + n_1n_2+2n_2^2+r_1n_1+2r_2n_2+2n_2+r_2+\frac12}\\
&=\sum_{n_1\geq 0} \left(\sum_{n_2\geq 0} - \sum_{n_2 < -r_2}\right) (-1)^{n_1} q^{\frac{n_1(n_1+1)}{2} + n_1n_2+2n_2^2+r_1n_1+2r_2n_2+2n_2+r_2+\frac12}\\
&= \sum_{\substack{n_1\geq 0 \\ n_2\in\Z }} \varrho_{n_2,n_2+r_2}(-1)^{n_1} q^{\frac{n_1(n_1+1)}{2} + n_1n_2+2n_2^2+r_1n_1+2r_2n_2+2n_2+r_2+\frac12},
\end{align*}
where in the penultimate step we use the same argument as at the end of the proof of Lemma \ref{JacobiExplicit}.
Applying Lemma \ref{JacobiExplicit} gives the claim for $|q| < |\zeta_j|<1$, $|q|<|\zeta_1 \zeta_2 | <1$. Using
 \begin{equation} \label{T2T}
\frac{\vartheta (z;2 \tau)}{\vartheta(z;\tau)}=\frac{q^{\frac18} (-q;q)_\infty }{(\zeta q,\zeta^{-1}q ;q^2)_\infty}
\end{equation}
combined with the uniqueness of the Laurent expansion inside the domain
$|q| < |\zeta_j|<|q|^{-1}$, $|q|<|\zeta_1 \zeta_2 | < |q|^{-1}$
extends the claim to that domain.
\end{proof}

\subsection{Combining the results}\label{sec-proofMainTheorem}
\begin{proof}[Proof of Theorem \ref{TheoremCoeffJacobiExplicit} and Theorem \ref{TheoremCoeffJacobi}]
	We use Proposition \ref{ct} and then Proposition \ref{ConjG} (with $\boldsymbol{r}\mapsto\left(2r_1-r_2,r_1+r_2\right)$) to obtain 
	\begin{align*}
	{\rm coeff}_{\left[\zeta_1^{r_1},\,\zeta_{2}^{r_2}\right]} F\left(\zeta_1,\zeta_2;q\right)=
	\mathbb{G}_{\boldsymbol{r}}(\tau)=
	q^{2Q(\b r)} \frac{\eta(\tau)^5}{\eta(2\tau)}
	{\rm coeff}_{\left[\zeta_1^{2r_1-r_2},\,\zeta_{2}^{r_1+r_2}\right]} f\left(\b{z};\tau\right)
	\end{align*}
	as claimed in Theorem \ref{TheoremCoeffJacobiExplicit}. In particular, this yields Theorem \ref{TheoremCoeffJacobi}, using Proposition \ref{prop-index} to conclude the transformation properties of $f$.
\end{proof}

\section{Fourier coefficients of additional characters}\label{sec-additionalChars}
In this section we compute the Fourier coefficients of two additional characters from \cite{KW}. Again $p=2$,
Lemma \ref{JacobiExplicit} immediately implies the following special case.
	
	
	

\begin{corollary} \label{cor-00}
We have
	$$\frac{\eta(\tau)^5}{\eta(2 \tau)} {\rm CT}_{[\zeta_1,\,\zeta_2]} f(\b z;\tau)
=\sum_{\substack{ n_1 \geq 0 \\ n_2 \in \mathbb{Z}}} {\rm sgn}^*(n_2)(-1)^{n_1} q^{\frac{n_1(n_1+1)}{2}+n_1n_2+2n_2^2+2n_2+\frac12 },$$
where ${\rm CT}_{[\zeta_1,\,\zeta_2]}:={\rm coeff}_{[\zeta^0_1,\,\zeta_2^0]}$.
\end{corollary}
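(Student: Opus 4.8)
The plan is to derive Corollary \ref{cor-00} as the specialization of Lemma \ref{JacobiExplicit} to the constant term, i.e.\ the case $\b r=(0,0)$. Since $\mathrm{CT}_{[\zeta_1,\zeta_2]}$ is by definition $\mathrm{coeff}_{[\zeta_1^0,\zeta_2^0]}$, setting $r_1=r_2=0$ in Lemma \ref{JacobiExplicit} directly gives
\begin{align*}
\frac{\eta(\tau)^5}{\eta(2\tau)}\,\mathrm{CT}_{[\zeta_1,\zeta_2]} f(\b z;\tau)
=\sum_{\substack{n_1\geq 0\\ n_2\in\Z}} \varrho_{n_2,n_2}\,(-1)^{n_1}\, q^{\frac{n_1(n_1+1)}{2}+n_1 n_2+2n_2^2+2n_2+\frac12}.
\end{align*}
So the only thing left to check is that the coefficient $\varrho_{n_2,n_2}$ appearing here equals $\mathrm{sgn}^*(n_2)$.

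First I would unwind the definition $\varrho_{n_1,n_2}:=\frac12(\mathrm{sgn}^*(n_1)+\mathrm{sgn}^*(n_2))$ at the diagonal, obtaining $\varrho_{n_2,n_2}=\frac12(\mathrm{sgn}^*(n_2)+\mathrm{sgn}^*(n_2))=\mathrm{sgn}^*(n_2)$. This is immediate from the definition and requires no case analysis, since both arguments coincide. Substituting this identity into the sum above produces exactly the claimed right-hand side, completing the derivation.

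There is essentially no obstacle here: the corollary is a pure specialization, and the phrase "immediately implies" in the statement signals that no genuinely new computation is involved. The only point deserving a word of care is the admissibility of the substitution $\b r=(0,0)$ into Lemma \ref{JacobiExplicit}, which holds for all $\b r\in\Z^2$ under the stated analytic restrictions $\lvert q\rvert<\lvert\zeta_j\rvert<1$ and $\lvert q\rvert<\lvert\zeta_1\zeta_2\rvert<1$; these are inherited automatically and need no separate verification. Thus the whole proof amounts to the two lines: invoke Lemma \ref{JacobiExplicit} with $\b r=\b 0$, then simplify $\varrho_{n_2,n_2}=\mathrm{sgn}^*(n_2)$.
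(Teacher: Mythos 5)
Your proposal is correct and matches the paper exactly: the paper derives Corollary \ref{cor-00} by the same specialization $\b r=(0,0)$ of Lemma \ref{JacobiExplicit} (it states only that the lemma ``immediately implies'' the corollary), and the simplification $\varrho_{n_2,n_2}=\operatorname{sgn}^*(n_2)$ is exactly the point that makes the specialization work.
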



In addition to characters discussed in (\ref{new}), Kac and Wakimoto also obtained  character formulas for modules $L(-\frac{3}{2} \Lambda_j)$, for $1 \leq j \leq N-1$ \cite[p.130]{KW}.
For $N=3$ we have two additional modules, namely $L(-\frac{3}{2} \Lambda_1)$ and $L(-\frac{3}{2} \Lambda_2)$.  The relevant Fourier coefficients for these characters are
$$
\mathbb{H}_{\b r}(\tau):= -\frac{\eta(\tau)^5}{\eta(2 \tau)} {\rm coeff}_{\left[\zeta_1^{r_1},\,\zeta_2^{r_2}\right]}
 \frac{\vartheta(z_1;2 \tau)\vartheta_{01}(z_2;2 \tau)
\vartheta_{01}(z_1+z_2; 2 \tau)}{\vartheta(z_1; \tau)\vartheta(z_2;
\tau) \vartheta(z_1+z_2; \tau)}, \quad     r_1 \in \frac12 + \Z, \ r_2 \in \Z ,
$$
where the range is $\lvert q\rvert<\lvert \zeta_j\lvert < |1|$, $|q|<\lvert \zeta_1\zeta_2\lvert <1$ and where 
$$
\vartheta_{01}(z; \tau):= \left(q,\zeta q^{\frac12},\zeta^{-1}q^{\frac12};q\right)_\infty.
$$
The next results shows that the Fourier coefficient $\mathbb{H}_{\b r}$ is essentially $\mathbb{G}_{\b \lambda}$ for some $\b \lambda\in\Q^2$.
\begin{proposition} \label{more-char} 
For $p=2$ and for every $r_1 \in \frac12+\mathbb{Z}$, $r_2 \in \mathbb{Z}$, we have 
$$\mathbb{H}_{\b r}(\tau)= q^{-\frac23Q(\b r)}\, \mathbb{G}_{\left(\frac13(r_1+r_2)-\frac12,\frac13(2r_2-r_1)-\frac12\right)}(\tau).$$
\end{proposition}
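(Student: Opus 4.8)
The plan is to reduce the computation of $\mathbb{H}_{\b r}$ to the machinery already developed for $f$ and $\mathbb{G}_{\b\lambda}$, exploiting the fact that the only difference between the generating function defining $\mathbb{H}_{\b r}$ and the Jacobi form $f$ is the replacement of $\vartheta(z_2;2\tau)\vartheta(z_1+z_2;2\tau)$ in the numerator of $f$ by $\vartheta_{01}(z_2;2\tau)\vartheta_{01}(z_1+z_2;2\tau)$. First I would express the theta-type factor $\vartheta_{01}$ in terms of $\vartheta$ via a shift of the elliptic variable. Using the product formula for $\vartheta$ from Section \ref{sec-JacobiTheta}, one checks that $\vartheta_{01}(z;\tau)$ is, up to an explicit power of $q$ and $\zeta$, equal to $\vartheta(z+\frac12;\tau)$ (or equivalently $\vartheta$ evaluated at a half-integer shift of the argument). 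Concretely, comparing $\vartheta_{01}(z;\tau)=(q,\zeta q^{1/2},\zeta^{-1}q^{1/2};q)_\infty$ with the triple product expression $\vartheta(z;\tau)=-iq^{1/8}\zeta^{-1/2}(\zeta,\zeta^{-1}q,q;q)_\infty$ shows that $\vartheta_{01}$ corresponds to shifting $z$ by $\frac{\tau}{2}$ (which turns $\zeta$ into $\zeta q^{1/2}$), modulo prefactors coming from \eqref{TranE}.

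With this identification in hand, the second step is to rewrite the numerator of the generating function for $\mathbb{H}_{\b r}$ as the numerator of $f$ evaluated at shifted elliptic variables. Since the denominators of the two generating functions coincide, shifting $z_2\mapsto z_2+\frac{\tau}{2}$ (and tracking the induced shift in $z_1+z_2$ as well) should convert the $\vartheta_{01}$-numerator into the $\vartheta$-numerator of $f$, at the cost of the explicit exponential/monomial prefactors from the half-period shift. I would therefore write the defining expression for $\mathbb{H}_{\b r}$ as $-\frac{\eta(\tau)^5}{\eta(2\tau)}\,\mathrm{coeff}_{[\zeta_1^{r_1},\zeta_2^{r_2}]}$ of a shift of $f$, absorb the prefactors, and match the extracted Fourier coefficient against the closed form for $\frac{\eta(\tau)^5}{\eta(2\tau)}\,\mathrm{coeff}_{[\zeta_1^{r_1},\zeta_2^{r_2}]}f$ provided by Lemma \ref{JacobiExplicit}. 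The half-period shift in $z_2$ moves the Fourier index $r_2$ by a half-integer amount and produces an extra $q$-power in the exponent, which is exactly what one expects given that $r_1\in\frac12+\Z$ here (rather than $r_1\in\Z$ as in Proposition \ref{ConjG}).

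The final step is bookkeeping: having produced a sum of the same shape as the right-hand side of Lemma \ref{JacobiExplicit} but with shifted parameters, I would recognize it as $q^{-\frac23 Q(\b r)}\mathbb{G}_{\b\lambda}(\tau)$ for the claimed $\b\lambda=(\frac13(r_1+r_2)-\frac12,\frac13(2r_2-r_1)-\frac12)$, by running the chain of identities in the proof of Proposition \ref{ConjG} in reverse. The key consistency check is that $\lambda_1+\lambda_2=r_2-1\in\Z$, so the hypothesis $\lambda_1+\lambda_2\in\Z$ needed in that proof remains satisfied, and the argument expressing $\mathbb{G}_{\b\lambda}$ as a single $\mathrm{sgn}^*$-weighted double sum goes through verbatim.

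I expect the main obstacle to be step one: correctly pinning down the half-period shift relating $\vartheta_{01}$ to $\vartheta$ and carefully propagating the resulting prefactors (powers of $q$ and $\zeta_1,\zeta_2$) through both the $z_2$ and the $z_1+z_2$ factors simultaneously, while keeping the convergence region $\lvert q\rvert<\lvert\zeta_j\rvert<1$, $\lvert q\rvert<\lvert\zeta_1\zeta_2\rvert<1$ intact under the shift. Once the prefactors are correctly accounted for, matching against Lemma \ref{JacobiExplicit} and identifying the shifted parameters with $\b\lambda$ is routine, mirroring the end of the proof of Proposition \ref{ConjG}.
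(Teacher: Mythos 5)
Your overall strategy is exactly the paper's: convert the $\vartheta_{01}$ factors into $\vartheta$ factors by an elliptic shift, rewrite $\mathbb{H}_{\b r}$ as a shifted Fourier coefficient of $f$, and then feed the result into the machinery of Proposition \ref{ConjG}. Your bookkeeping at the end is also right ($\lambda_1+\lambda_2=r_2-1\in\Z$, and the parameters match after replacing $\b r$ by $(r_1-\tfrac12,r_2-1)$ in Proposition \ref{ConjG}).

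However, the one step you flag as delicate is stated incorrectly, and as written it would derail the computation. The relation $\vartheta_{01}(z;\tau)\propto\vartheta(z+\tfrac{\tau}{2};\tau)$ (not $\vartheta(z+\tfrac12;\tau)$, which is the shift $\zeta\mapsto-\zeta$) is a half-period shift \emph{relative to the modulus of that theta function}. The numerator factors in $\mathbb{H}_{\b r}$ are $\vartheta_{01}(\,\cdot\,;2\tau)$, so the required shift is $z_2\mapsto z_2\pm\tau$, i.e.\ $\zeta_2\mapsto q^{\mp1}\zeta_2$ --- half of the period $2\tau$. Shifting $z_2$ by $\tfrac{\tau}{2}$ does two wrong things at once: it turns $\vartheta(z_2;2\tau)$ into a quarter-period shift that is \emph{not} proportional to $\vartheta_{01}(z_2;2\tau)$, and it simultaneously converts the denominator $\vartheta(z_2;\tau)$ into (a multiple of) $\vartheta_{01}(z_2;\tau)$, which you need to remain untouched. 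The paper avoids this trap by working with the ratio directly: from \eqref{T2T} one gets $\frac{\vartheta_{01}(z;2\tau)}{\vartheta(z;\tau)}=iq^{-\frac14}\zeta^{\frac12}\bigl[\frac{\vartheta(z;2\tau)}{\vartheta(z;\tau)}\bigr]_{\zeta\mapsto q^{-1}\zeta}$, applies this with $\zeta=\zeta_2$ and $\zeta=\zeta_1\zeta_2$ simultaneously via the single substitution $\zeta_2\mapsto q^{-1}\zeta_2$, and uses the product expansion \eqref{T2T} again to justify that the Laurent coefficients are unchanged by that substitution (the ratio is holomorphic in the full annulus $|q|<|\zeta|<|q|^{-1}$, so the expansion is the same on both sides of $|\zeta|=1$). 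Once you replace your $\tfrac{\tau}{2}$-shift by the $\zeta_2\mapsto q^{-1}\zeta_2$ substitution and supply that uniqueness-of-expansion argument, the rest of your plan goes through and reproduces the paper's proof; note also that the paper then invokes Proposition \ref{ConjG} directly, which is slightly cleaner than re-running Lemma \ref{JacobiExplicit} in reverse as you propose.
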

\begin{proof}
First we conclude from \eqref{T2T} that
\begin{align*}
 \frac{\vartheta_{01}(z;2 \tau)}{\vartheta(z;\tau)}=iq^{-\frac14} \zeta^{\frac12} \left[ \frac{\vartheta (z;2 \tau)}{\vartheta(z;\tau)} \right]_{\zeta \mapsto q^{-1} \zeta}.
\end{align*}
This implies that 
\begin{align*}
&   {\rm coeff}_{\left[\zeta_1^{r_1},\, \zeta_2^{r_2}\right]}
\frac{\vartheta(z_1;2 \tau)\vartheta_{01}(z_2;2 \tau)
\vartheta_{01}(z_1+z_2; 2 \tau)}{\vartheta(z_1; \tau)\vartheta(z_2;
\tau) \vartheta(z_1+z_2; \tau)}\\
& =  -q^{\frac12} {\rm coeff}_{\left[\zeta_1^{r_1},\,\zeta_2^{r_2}\right]}
\left[ \frac{ \zeta_1^{\frac12} \zeta_2 \vartheta(z_1;2 \tau)\vartheta(z_2;2 \tau)
\vartheta (z_1+z_2; 2 \tau)}{\vartheta(z_1; \tau)\vartheta(z_2;
\tau) \vartheta(z_1+z_2; \tau)}\right]_{\zeta_2 \mapsto q^{-1} \zeta_2}  \\
& = -q^{\frac12-r_2} {\rm coeff}_{\left[\zeta_1^{r_1},\,\zeta_2^{r_2}\right]}
\frac{\zeta_2 \zeta_1^{\frac12} \vartheta(z_1;2 \tau)\vartheta(z_2;2 \tau)
\vartheta (z_1+z_2; 2 \tau)}{\vartheta(z_1; \tau)\vartheta(z_2;
\tau) \vartheta(z_1+z_2; \tau)}=
-q^{\frac12-r_2} {\rm coeff}_{\myleft[\zeta_1^{r_1-\frac12},\,\zeta_2^{r_2-1}\myright]}
f(\b z;\tau),
\end{align*} 
where we use \eqref{T2T} to justify that the coefficients do not change under the substitution $\zeta_2 \mapsto q^{-1} \zeta_2$ above. We apply this and Proposition \ref{ConjG} to obtain, as claimed
\begin{align*}
&\mathbb{H}_{\b r}(\tau)= \frac{\eta(\tau)^5}{\eta(2 \tau)} q^{\frac12-r_2} {\rm coeff}_{\myleft[\zeta_1^{r_1-\frac12},\,\zeta_2^{r_2-1}\myright]}
f\left(\b{z};\tau\right) \\
&=q^{\frac12-r_2} q^{-\frac23 Q\left(r_1-\frac12, r_2-1\right)}\, \mathbb{G}_{\frac13 \left(r_1-\frac12+r_2-1,2r_2-2-r_1+\frac12\right)}(\tau) =  q^{-\frac{2}{3} Q(\boldsymbol{r})}\, \mathbb{G}_{\left(\frac13(r_1+r_2)-\frac12,\frac13(2r_2-r_1)-\frac12 \right)}(\tau).
\end{align*}

\vskip-1em
\end{proof}

\section{$q$-hypergeometric formulas and the proof of Theorem \ref{thm:elegant identity}}
In this section,  we study $q$-hypergeometric representations of the series $\mathbb{G}_{\b{r}}$ introduced in Section 3 and in particular prove Theorem \ref{thm:elegant identity}. 
Again we assume that $p=2$. We first require an auxiliary lemma.
\begin{lemma}\label{lem:auxlem} We have for $|q|<|\zeta|^2<1$
	\begin{equation*}		
	\frac{1}{\left(\zeta q^{\frac12},\zeta^{-1} q^{\frac12};q\right)_\infty}=\frac{1}{(q;q)^2_\infty}   \sum_{\substack{n_1\in\mathbb{Z} \\ n_2\geq |n_1|}}  (-1)^{n_1+n_2} q^{\frac{ n_2(n_2+1)}{2}-\frac{n_1^2}{2}}\zeta^{n_1}= \sum_{\substack{n_1 \in \mathbb{Z} \\ n_2\geq 0}} \frac{q^{\frac{|n_1|}2+n_2}}{(q;q)_{n_2} (q;q)_{|n_1|+n_2}} \zeta^{n_1}.
	\end{equation*}
\end{lemma}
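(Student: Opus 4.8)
The plan is to establish the two equalities separately, since they have rather different flavors. For the rightmost equality I would expand each of the two factors in the denominator of the left-hand side by the $q$-binomial theorem (Euler's identity $1/(x;q)_\infty=\sum_{a\ge0}x^a/(q;q)_a$), which is legitimate precisely in the stated range $|q|<|\zeta|^2<1$ since this forces both $|\zeta q^{\frac12}|<1$ and $|\zeta^{-1}q^{\frac12}|<1$. This produces a double sum over $(a,b)\in\N_0^2$ of $q^{(a+b)/2}\zeta^{a-b}/((q;q)_a(q;q)_b)$. I would then reindex by $n_1:=a-b$ and $n_2:=\min(a,b)$; using $a+b=2n_2+|n_1|$ and $\{a,b\}=\{n_2,\,n_2+|n_1|\}$ this turns the double sum into exactly the rightmost expression. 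This step is routine.

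The left equality is the substantive one, and I would prove it in the equivalent form middle $=$ right, comparing coefficients of $\zeta^{n_1}$ on both sides. By the evident symmetry $n_1\mapsto-n_1$ it suffices to treat $m:=n_1\ge0$; pulling out the common factor $q^{m/2}$ and shifting $n_2=m+k$ on the theta side (so that $(-1)^{m+n_2}=(-1)^k$ and the exponent becomes $\tfrac{k(k+1)}{2}+mk+\tfrac m2$), the claim reduces, after multiplying through by $(q;q)_\infty^2$ and using $(q;q)_\infty/(q;q)_j=(q^{j+1};q)_\infty$, to the single-variable identity
\[
\sum_{n\ge0}q^n\,(q^{n+1};q)_\infty\,(q^{n+m+1};q)_\infty=\sum_{k\ge0}(-1)^k q^{\frac{k(k+1)}{2}+mk}.
\]

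To prove this last identity I would expand the factor $(q^{n+m+1};q)_\infty$ by Euler's other formula $(x;q)_\infty=\sum_{k\ge0}(-1)^k q^{\binom{k}{2}}x^k/(q;q)_k$ with $x=q^{n+m+1}$, interchange the order of summation, and isolate the inner sum $\sum_{n\ge0}q^{n(k+1)}(q^{n+1};q)_\infty$. The key observation is that this inner sum collapses: writing $(q^{n+1};q)_\infty=(q;q)_\infty/(q;q)_n$ and applying the $q$-binomial theorem once more gives $\sum_{n\ge0}q^{n(k+1)}(q^{n+1};q)_\infty=(q;q)_\infty/(q^{k+1};q)_\infty=(q;q)_k$. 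This factor $(q;q)_k$ cancels the $(q;q)_k$ in the denominator, leaving $\sum_{k\ge0}(-1)^k q^{\binom k2+(m+1)k}$, and the one-line simplification $\binom k2+(m+1)k=\binom{k+1}2+mk$ of the exponent finishes the argument.

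The main obstacle is finding the correct manipulation for the left equality: the middle expression is an indefinite (signature $(1,1)$) theta sum over the cone $n_2\ge|n_1|$, so it is not transparent how to relate it to an infinite product. The crux is the telescoping-style collapse of the inner geometric-type sum to $(q;q)_k$, which is exactly what makes the double application of Euler's identities succeed; everything else (the reindexing in the first equality, the symmetrization in $n_1$, and the verification of the convergence ranges) is bookkeeping.
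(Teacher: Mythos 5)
Your proof is correct, and for the substantive part it takes a genuinely different route from the paper. For the second equality (product equals the $q$-hypergeometric double sum) you do exactly what the paper does: expand both Euler products and reindex by $n_1=a-b$, $n_2=\min(a,b)$. For the first equality, however, the paper simply cites Lemma~1 of Andrews (\emph{Hecke modular forms and the Kac--Peterson identities}, 1984) with the substitution $z=\zeta q^{-1/2}$, whereas you give a self-contained derivation by proving middle $=$ right directly: after symmetrizing in $n_1$ and shifting $n_2=m+k$, you reduce to
\begin{equation*}
\sum_{n\ge0}q^n\left(q^{n+1};q\right)_\infty\left(q^{n+m+1};q\right)_\infty=\sum_{k\ge0}(-1)^k q^{\frac{k(k+1)}{2}+mk},
\end{equation*}
which you prove by expanding $(q^{n+m+1};q)_\infty$ via Euler's alternating series, swapping sums, and collapsing the inner sum $\sum_{n\ge0}q^{n(k+1)}(q^{n+1};q)_\infty=(q;q)_\infty/(q^{k+1};q)_\infty=(q;q)_k$ against the denominator. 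I checked the exponent bookkeeping ($\binom{k}{2}+(m+1)k=\frac{k(k+1)}{2}+mk$) and the convergence conditions ($|q|<|\zeta|^2<1$ does give both $|\zeta q^{1/2}|<1$ and $|\zeta^{-1}q^{1/2}|<1$), and everything holds. What your approach buys is independence from the external Hecke-type indefinite theta identity: everything follows from the two Euler expansions. What the paper's citation buys is brevity, and it makes explicit that the middle expression is an instance of a known signature $(1,1)$ Hecke-type series. Either proof is acceptable; yours is arguably preferable for a self-contained exposition.
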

\begin{proof}
	The first equality follows directly by setting  $z=\zeta q^{-\frac12}$ in \cite[Lemma 1]{An-1984}.

	For the second equality, we recall \cite[(2.2.5)]{An}, which states that for $|q| < |\zeta|< 1$,
	$$\frac{1}{(\zeta;q)_\infty}=\sum_{n \geq 0} \frac{\zeta^n}{(q;q)_n}.$$
	Using this, we obtain 
	\begin{equation*}\label{identity51}
	\frac{1}{\left(\zeta q^{\frac12},\zeta^{-1} q^{\frac12};q\right)_\infty}=\sum_{\b n\in\N_0^2} \frac{ q^{\frac{n_1+n_2}{2}}}{(q;q)_{n_1} (q;q)_{n_2}}\zeta^{n_1-n_2}.
	\end{equation*}
	The second identity of Lemma \ref{lem:auxlem} then follows by letting $(n_1,n_2)\mapsto(n_1+n_2,n_2)$ if $n_1\geq n_2$ and $(n_1,n_2)\mapsto(n_2,n_2-n_1)$ if $n_1<n_2$.	
\end{proof}

\begin{remark} 
We record another identity similar to the one in Lemma \ref{lem:auxlem}, namely  
\begin{equation}\label{lab2} 
	\frac{1}{\left(\zeta q^{\frac12},\zeta^{-1} q^{\frac12};q\right)_\infty}=\frac{1}{(q;q)_\infty} \sum_{\substack{n_1 \in \mathbb{Z} \\ n_2\geq 0}} \frac{q^{n_2^2+n_2(|n_1|+1)+\frac{|n_1|}2}}{(q;q)_{n_2} (q;q)_{|n_1|+n_2}} \zeta^{n_1}.
	\end{equation}
This equality can be established by proving that the middle expression in Lemma \ref{lem:auxlem} equals the right-hand side of (\ref{lab2})
$$\frac{1}{(q;q)^2_\infty}   \sum_{\substack{\b{n}\in \mathbb{Z}^2 \\ n_2\geq |n_1|}}  (-1)^{n_1+n_2} q^{\frac{ n_2(n_2+1)}{2}-\frac{n_1^2}{2}}\zeta^{n_1}=\frac{1}{(q;q)_\infty} \sum_{\substack{n_1 \in \mathbb{Z} \\ n_2\geq 0}} \frac{q^{n_2^2+n_2(|n_1|+1)+\frac{|n_1|}2}}{(q;q)_{n_2} (q;q)_{|n_1|+n_2}} \zeta^{n_1}.$$
This follows from Ramanujan's identity \cite[p.18, entry 9]{Berndt} after a simple substitution.
For related identities see \cite[Chapter 7]{BM0} and \cite{Wa1}.
\end{remark}



Next we generalize Lemma \ref{lem:auxlem} to ``rank two''. For this, we set
\begin{equation*}
G_{\boldsymbol{r}}(q):=\sum_{\substack{\b n\in \N_0^3 \\ n_4\in\Z}} \frac{q^{n_1+n_2+n_3+ \frac12 \left(|n_4-r_1|+|n_4-r_2|+|n_4|\right)}}{(q;q)_{n_1} (q;q)_{n_1+|n_4-r_1|}(q;q)_{n_2} (q;q)_{n_2+ |n_4-r_2|}  (q;q)_{n_3} (q;q)_{n_3+ |n_4|}}. 
\end{equation*}
Applying Proposition \ref{ConjG}, then Lemma \ref{lem:auxlem}
three times, and 
 \begin{equation}\label{thetafrac}
\frac{\vartheta\left(z;2\tau\right)}{\vartheta\left(z;\tau\right)}
=\frac{q^{\frac1{12}}\eta(2\tau) }{\eta(\tau)\left(\zeta q,\zeta^{-1}q; q^2\right)_\infty},
\end{equation}
immediately implies the following proposition.
\begin{proposition}\label{prop}
	We have, for $p=2$ and $\b r\in\Z^2$,
	\begin{align*}
	&\hspace{-1.5cm} \frac{q^{-\frac14-\frac23Q(\b r)}}{\eta(\tau)^2\eta(2\tau)^2} \mathbb{G}_{\frac13(r_1+r_2,\,2r_2-r_1)}(\tau)  \\
	&={\rm coeff}_{\left[\zeta^{r_1}_1,\,\zeta^{r_2}_2\right]}\frac{1}{\left(\zeta_1 q^{\frac12},\zeta_1^{-1}q^{\frac12},\zeta_2q^{\frac12},\zeta_2^{-1}q^{\frac12},\zeta_1\zeta_2 q^{\frac12},\zeta_1^{-1}\zeta_2^{-1}q^{\frac12};q\right)_\infty}=G_{\boldsymbol{r}}(q).
	\end{align*}
\end{proposition}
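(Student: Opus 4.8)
The plan is to assemble Proposition \ref{prop} from three ingredients already in hand, following exactly the chain of operations announced in the sentence preceding the statement: apply Proposition \ref{ConjG}, then rewrite the Jacobi form $f$ as an infinite product via \eqref{thetafrac}, and finally extract the Fourier coefficient using Lemma \ref{lem:auxlem} three times. First I would start from the left-hand side and invoke Proposition \ref{ConjG}, which identifies $q^{-\frac23 Q(\b r)}\mathbb{G}_{\frac13(r_1+r_2,\,2r_2-r_1)}(\tau)$ with $\frac{\eta(\tau)^5}{\eta(2\tau)}\,{\rm coeff}_{[\zeta_1^{r_1},\,\zeta_2^{r_2}]} f(\b z;\tau)$. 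Substituting this in, the prefactor $\frac{q^{-\frac14}}{\eta(\tau)^2\eta(2\tau)^2}$ combines with $\frac{\eta(\tau)^5}{\eta(2\tau)}$, so that the whole left-hand side becomes $q^{-\frac14}\frac{\eta(\tau)^3}{\eta(2\tau)^3}\,{\rm coeff}_{[\zeta_1^{r_1},\,\zeta_2^{r_2}]} f(\b z;\tau)$.

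Next I would rewrite $f(\b z;\tau)$ as a product. By definition \eqref{f}, $f$ is a product of three factors of the form $\frac{\vartheta(\,\cdot\,;2\tau)}{\vartheta(\,\cdot\,;\tau)}$, one in $z_1$, one in $z_2$, one in $z_1+z_2$. Applying \eqref{thetafrac} to each factor produces a factor $\frac{q^{\frac1{12}}\eta(2\tau)}{\eta(\tau)}$ together with a reciprocal Pochhammer symbol, so the three factors contribute $\frac{q^{\frac14}\eta(2\tau)^3}{\eta(\tau)^3}$ overall. This cancels the remaining $q^{-\frac14}\frac{\eta(\tau)^3}{\eta(2\tau)^3}$ prefactor exactly, leaving precisely
\[
{\rm coeff}_{\left[\zeta_1^{r_1},\,\zeta_2^{r_2}\right]}\frac{1}{\left(\zeta_1 q,\zeta_1^{-1}q,\zeta_2 q,\zeta_2^{-1}q,\zeta_1\zeta_2 q,\zeta_1^{-1}\zeta_2^{-1}q;q^2\right)_\infty}.
\]
A change of base (replacing $q$ by $q^{\frac12}$, equivalently matching the $q^2$-Pochhammer symbols here with the $q$-Pochhammer symbols in the statement) then gives the middle expression of the proposition; I would note that the domain restrictions $|q|<|\zeta_j|,|\zeta_1\zeta_2|<1$ from Proposition \ref{ConjG} are what license this product expansion.

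For the final equality I would expand each of the three reciprocal theta-quotient factors using the second (hypergeometric) identity of Lemma \ref{lem:auxlem}. The factor in $\zeta_1$ gives a sum over $(m_1,k_1)$, the factor in $\zeta_2$ a sum over $(m_2,k_2)$, and the factor in $\zeta_1\zeta_2$ a sum over $(m_3,k_3)$, each summand carrying a monomial $\zeta^{m_i}$ and a ratio of Pochhammer symbols indexed by $k_i$ and $|m_i|+k_i$. Extracting ${\rm coeff}_{[\zeta_1^{r_1},\,\zeta_2^{r_2}]}$ imposes the two linear constraints $m_1+m_3=r_1$ and $m_2+m_3=r_2$; renaming $m_3=n_4$ then forces $m_1=r_1-n_4$ and $m_2=r_2-n_4$, and relabelling $k_1,k_2,k_3$ as $n_1,n_2,n_3$ yields exactly $G_{\b r}(q)$. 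The bookkeeping here is routine once the substitution is set up, so I would simply state that matching the exponents and the absolute-value factors $|n_4-r_1|,|n_4-r_2|,|n_4|$ gives the claimed triple-plus-single sum. The only genuine point requiring care — and the step I expect to be the main obstacle — is verifying that all the $q$-powers and the $\eta$-quotients cancel cleanly after the base change; in particular one must track the $q^{\frac14}$, $q^{\frac18}$, and $q^{\frac1{12}}$ exponents through \eqref{thetafrac} and through the halving of the base to confirm the prefactor collapses to $1$, which is precisely what makes the final identity prefactor-free.
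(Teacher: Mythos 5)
Your route is exactly the paper's (the paper's proof is literally a one--sentence pointer to Proposition \ref{ConjG}, \eqref{thetafrac}, and Lemma \ref{lem:auxlem} applied three times), and every step you carry out explicitly is correct: Proposition \ref{ConjG} turns the first expression into $q^{-\frac14}\frac{\eta(\tau)^3}{\eta(2\tau)^3}\,{\rm coeff}_{[\zeta_1^{r_1},\,\zeta_2^{r_2}]}f(\b z;\tau)$, the three applications of \eqref{thetafrac} cancel that prefactor exactly and leave ${\rm coeff}_{[\zeta_1^{r_1},\,\zeta_2^{r_2}]}\frac{1}{\left(\zeta_1q,\zeta_1^{-1}q,\zeta_2q,\zeta_2^{-1}q,\zeta_1\zeta_2q,\zeta_1^{-1}\zeta_2^{-1}q;q^2\right)_\infty}$, and your bookkeeping with Lemma \ref{lem:auxlem} (the constraints $m_1+m_3=r_1$, $m_2+m_3=r_2$, the relabelling $m_3=n_4$) correctly identifies the base-$q$ product's coefficient with $G_{\b r}(q)$.

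The one step that fails is the ``change of base.'' Replacing $q$ by $q^{\frac12}$ in $\frac{1}{(\zeta_1 q,\ldots;q^2)_\infty}$ does produce the displayed middle expression, but it also changes its value: the two products are different functions of $q$, and the left end of your chain is pinned to the honest modular variable $q=e^{2\pi i\tau}$, so you are not free to rescale midway. What your computation actually proves is that the first expression equals $G_{\b r}(q^2)$, while the middle and right expressions equal $G_{\b r}(q)$, and these differ: for $\b r=(0,0)$ the first expression expands as $1+3q^2+\cdots$ whereas $G_{(0,0)}(q)=1+3q+\cdots$. This mismatch is a defect of the Proposition as printed rather than of your argument --- the paper's own \eqref{eq:Prop52var} and Theorem \ref{thm:elegant identity} use $G_{\b r}(q^2)$, consistent with what you derived. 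Your instinct that tracking the exponents through the halving of the base is ``the main obstacle'' was exactly right, but the claim that everything ``collapses cleanly'' is what a low-order numerical check would have disproved: the $\eta$- and $q$-prefactors do collapse, the base does not, and the correct conclusion of your argument is the identity with $G_{\b r}(q^2)$ on the right (equivalently, with the middle product taken in base $q^2$ with arguments $\zeta_1 q,\dots$).
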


Now we are ready to prove the $q$-hypergeometric formula in Theorem \ref{thm:elegant identity}.

\noindent {\bf Proof of  Theorem \ref{thm:elegant identity}:}
Using \eqref{thetafrac} we can rewrite Proposition \ref{prop}  as
\begin{align}\label{eq:Prop52var}
q^{-\frac14} \frac{\eta(\tau)^3}{\eta(2\tau)^3} \operatorname{coeff}_{\left[\zeta_1^{r_1},\,\zeta_2^{r_2}\right]} 	f(\b{z};\tau) =G_{\b{r}}\!\left(q^2\right).
\end{align}
Plugging in the definition of $G_{(0,0)}$, $(q;q)_\infty=q^{-\frac{1}{24}}\eta(\tau)$, and then equation \eqref{eq:Prop52var} gives that the right-hand side of Theorem \ref{thm:elegant identity} equals
\begin{align*}
q^{-\frac12} \frac{\eta(\tau)^5}{\eta(2\tau)} \operatorname{coeff}_{\left[\zeta_1^{0},\,\zeta_2^{0}\right]} 	f(\b{z};\tau).
\end{align*}
The claim now follows by applying Lemma \ref{JacobiExplicit}.
\qed




As a corollary, we find a $q$-hypergeometric type identity for $F(\zeta_1,\zeta_2;q)$. 
\begin{corollary}\label{cor:F} 
We have, for $p=2$,
\begin{align*}
q^{-\frac14 }F\left(\zeta_1,\zeta_2;q\right) 
= \eta(\tau)^2\eta(2\tau)^2
 \sum_{\b r\in\Z^2}  q^{2Q(\b r)}  G_{(2r_1-r_2,\,r_1+r_2)}\left(q^2\right)\zeta_1^{r_1}\zeta_
 2^{r_2}.	
\end{align*}

\end{corollary}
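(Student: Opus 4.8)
The plan is to obtain Corollary \ref{cor:F} by assembling the $q$-hypergeometric representation of each Fourier coefficient (Proposition \ref{prop}) into the full two-variable generating function $F(\zeta_1,\zeta_2;q)$. The starting point is the Fourier expansion
\begin{equation*}
F(\zeta_1,\zeta_2;q)=\sum_{\b r\in\Z^2}\left(\operatorname{coeff}_{\left[\zeta_1^{r_1},\,\zeta_2^{r_2}\right]}F(\zeta_1,\zeta_2;q)\right)\zeta_1^{r_1}\zeta_2^{r_2},
\end{equation*}
which is legitimate since, as noted after the definition of $F$, the summand is a Laurent polynomial in $\zeta_1,\zeta_2$, so the expansion converges in the appropriate annulus and the coefficients are exactly the functions computed earlier.

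First I would invoke Theorem \ref{TheoremCoeffJacobiExplicit} (equivalently Proposition \ref{ct} followed by Proposition \ref{ConjG}) to write each Fourier coefficient as
\begin{equation*}
\operatorname{coeff}_{\left[\zeta_1^{r_1},\,\zeta_2^{r_2}\right]}F(\zeta_1,\zeta_2;q)=\mathbb{G}_{\b r}(\tau)=q^{2Q(\b r)}\frac{\eta(\tau)^5}{\eta(2\tau)}\operatorname{coeff}_{\left[\zeta_1^{2r_1-r_2},\,\zeta_2^{r_1+r_2}\right]}f(\b z;\tau).
\end{equation*}
Next I would feed in the $q$-hypergeometric evaluation of the coefficient of $f$. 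Rearranging the identity \eqref{eq:Prop52var} from the proof of Theorem \ref{thm:elegant identity} gives
\begin{equation*}
\frac{\eta(\tau)^5}{\eta(2\tau)}\operatorname{coeff}_{\left[\zeta_1^{s_1},\,\zeta_2^{s_2}\right]}f(\b z;\tau)=q^{\frac14}\eta(\tau)^2\eta(2\tau)^2\,G_{\b s}\!\left(q^2\right),
\end{equation*}
since $\eta(\tau)^5/\eta(2\tau)=(\eta(\tau)^3/\eta(2\tau)^3)\cdot\eta(\tau)^2\eta(2\tau)^2$. Applying this with $\b s=(2r_1-r_2,\,r_1+r_2)$ turns each Fourier coefficient of $F$ into $q^{2Q(\b r)+\frac14}\eta(\tau)^2\eta(2\tau)^2\,G_{(2r_1-r_2,\,r_1+r_2)}(q^2)$.

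Substituting this back into the Fourier expansion and pulling the $q^{\frac14}\eta(\tau)^2\eta(2\tau)^2$ prefactor (which is independent of $\b r$) outside the sum yields
\begin{equation*}
F(\zeta_1,\zeta_2;q)=q^{\frac14}\eta(\tau)^2\eta(2\tau)^2\sum_{\b r\in\Z^2}q^{2Q(\b r)}\,G_{(2r_1-r_2,\,r_1+r_2)}\!\left(q^2\right)\zeta_1^{r_1}\zeta_2^{r_2},
\end{equation*}
and multiplying through by $q^{-\frac14}$ gives precisely the claimed formula. The computation is essentially bookkeeping once the two input identities are in hand, so I do not expect a serious obstacle; the only point that deserves care is tracking the weight prefactors ($\eta(\tau)^5/\eta(2\tau)$ versus $\eta(\tau)^2\eta(2\tau)^2$ and the $q^{\pm\frac14}$ powers) and confirming that they are genuinely constant in $\b r$ so that they factor out of the sum cleanly, together with a brief remark that the annulus $|q|<|\zeta_j|,|\zeta_1\zeta_2|<1$ from Proposition \ref{prop} is where the expansion is valid.
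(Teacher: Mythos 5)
Your proof is correct and follows essentially the route the paper intends: the corollary is stated without proof immediately after Theorem \ref{thm:elegant identity}, and the intended derivation is exactly your chain (Fourier expansion of $F$, Theorem \ref{TheoremCoeffJacobiExplicit}, then \eqref{eq:Prop52var} with $\b s=(2r_1-r_2,\,r_1+r_2)$, using $\eta(\tau)^5/\eta(2\tau)=(\eta(\tau)^3/\eta(2\tau)^3)\cdot\eta(\tau)^2\eta(2\tau)^2$). Your bookkeeping of the $q^{\pm\frac14}$ and $\eta$-prefactors checks out.
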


\begin{remark}
As explained in Section 4, Theorem \ref{thm:elegant identity} can be viewed as a rank two analogue of the identity in Lemma \ref{lem:auxlem}.
However, Warnaar  \cite[Section 5]{Wa1} obtained another identity for the false theta with an additional parameter $w$ and no infinite products
\begin{equation*}
\sum_{n \geq 0} \frac{\left(q;q^2\right)_n \left(wq;q^2\right)_n (wq)^n}{(-wq;q)_{2n+1}} =\sum_{n  \geq 0} (-1)^n q^{n^2} w^n.
\end{equation*}
This remarkable identity can be used for explicit computation of radial limits of various partial and false theta functions at root of unity \cite{BM,FOR}.
It would be very interesting to find a similar identity for the rank two false theta functions studied in this paper, presumably with two additional parameters.
\end{remark}



\section{A Jacobi form of index zero and $\lim_{(\zeta_1,\zeta_2) \to (1,1)} F(\zeta_1,\zeta_2;q)$}

In this section we are interested in the limit 
$$F_0(q):=\lim_{(\zeta_1,\zeta_2) \to (1,1)}  F(\zeta_1,\zeta_2;q),$$
which we realize for $p=2$ as a Fourier coefficient of a Jacobi form in two variables.
In \cite[Example 4.3]{BM}, the first and the third author established the following identity for every $p \geq 2$
\[
F_0(q)=\frac{1}{2} \sum_{\b n \in \mathbb{Z}^2} (2n_1-n_2)(2n_2-n_1)(n_1+n_2)q^{pQ\left(\b n-\left(\frac1p,\frac1p\right)\right)}.
\]
Due to the harmonicity of the coefficients in $F_0$, this function is a sum of modular forms of positive integral weight of at most  four \cite[Example 4.3]{BM}. 
As in Section 3, we find the following simplification for $p=2$.
\begin{proposition} For $p=2$,  we have 
$$F_0(q)= \frac14 \sum_{\b n \in \mathbb{Z}^2} (12 n_1 n_2-3n_1^2-3n_2^2-n_1-n_2) q^{ 2 Q\left(\b n-\left(\frac12,\frac12\right)\right) }.$$
\end{proposition}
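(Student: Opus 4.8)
The plan is to start from the closed formula for $F_0$ recorded just above the statement, specialized to $p=2$, namely
\[
F_0(q)=\frac12\sum_{\b n\in\Z^2}(2n_1-n_2)(2n_2-n_1)(n_1+n_2)\,q^{2Q\left(\b n-\left(\frac12,\frac12\right)\right)},
\]
and to reduce the cubic coefficient to the claimed quadratic one purely by exploiting a symmetry of the exponent. The crucial observation is that $Q$ is an even quadratic form, $Q(-\b m)=Q(\b m)$, so the exponent $2Q(\b n-(\frac12,\frac12))$ is invariant under the involution $\b n\mapsto(1,1)-\b n$, which sends $\b m:=\b n-(\frac12,\frac12)$ to $-\b m$. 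Re-indexing the sum by this involution shows that only the part of the coefficient polynomial that is even in $\b m$ can contribute; every term of odd total degree in $\b m$ cancels in pairs.

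Concretely, I would rewrite the coefficient as a polynomial in $\b m$ via the substitution $n_j=m_j+\frac12$. Using $2n_1-n_2=2m_1-m_2+\frac12$, $2n_2-n_1=2m_2-m_1+\frac12$, and $n_1+n_2=m_1+m_2+1$, together with the relation $(2m_1-m_2)+(2m_2-m_1)=m_1+m_2$, the even part of $(2n_1-n_2)(2n_2-n_1)(n_1+n_2)$ works out to $6m_1m_2-\frac32m_1^2-\frac32m_2^2+\frac14$. Hence
\[
F_0(q)=\frac12\sum_{\b n\in\Z^2}\left(6m_1m_2-\tfrac32m_1^2-\tfrac32m_2^2+\tfrac14\right)q^{2Q(\b m)}.
\]

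The final step is to check that the claimed coefficient produces the same even part. Substituting $n_j=m_j+\frac12$ into $12n_1n_2-3n_1^2-3n_2^2-n_1-n_2$ gives $12m_1m_2-3m_1^2-3m_2^2+2m_1+2m_2+\frac12$, whose even part is $12m_1m_2-3m_1^2-3m_2^2+\frac12$; the surviving linear terms $2m_1+2m_2$ again drop out under $\b m\mapsto-\b m$. Since $\frac12(6m_1m_2-\frac32m_1^2-\frac32m_2^2+\frac14)=\frac14(12m_1m_2-3m_1^2-3m_2^2+\frac12)$, the two sums over $\b n$ coincide, which is exactly the assertion. I do not expect any genuine obstacle here: once the central symmetry $\b n\mapsto(1,1)-\b n$ is recognized, the argument is a short and routine polynomial computation, the only care needed being the correct separation of the coefficients into their parts of even and odd degree in $\b m$.
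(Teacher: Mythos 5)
Your argument is correct and takes essentially the same route as the paper: both proofs rest on the involution $\b{n}\mapsto(1,1)-\b{n}$ (equivalently $\b{m}\mapsto-\b{m}$ in your centered variable), which kills every odd-degree term, leaving only a polynomial identity between even parts to check. The paper phrases this as averaging the cubic coefficient with its image under the involution and then discarding a vanishing linear sum, but the content and the computation are the same as yours.
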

\begin{proof}
Using the involution $\b n\mapsto -\b n+(1,1)$ (under which $Q\left(\b n-(\frac12,\frac12)\right)$ is invariant), we obtain
\begin{align*}
&2 F_0(q)=  \frac{1}{2} \sum_{\b n \in \mathbb{Z}^2} (2n_1-n_2)(2n_2-n_1)(n_1+n_2)q^{2Q\left(\b n-\left(\frac12,\frac12\right)\right)}  \\
&\hspace{3.8cm} + \frac{1}{2} \sum_{\b n \in \mathbb{Z}^2} (-2n_1+n_2+1)(-2n_2+n_1+1)(-n_1-n_2+2)q^{2Q\left(\b n-\left(\frac12,\frac12\right)\right)}  \\
&= \sum_{\b n\in \mathbb{Z}^2} (-n_1-n_2+1) q^{2Q\left(\b n-\left(\frac12,\frac12\right)\right)}+\sum_{\b n\in \mathbb{Z}^2} \left(6 n_1 n_2-\tfrac32 n_1^2-\tfrac32 n_2^2-\tfrac12 n_1-\tfrac12n_2 \right) 
q^{2Q\left(\b n-\left(\frac12,\frac12\right)\right)}.
\end{align*}
Using again the involution $\b n\mapsto -\b n+(1,1)$ implies that
\begin{equation*}
\sum_{\b n  \in \mathbb{Z}^2}   (n_1+n_2-1) q^{2Q\left(\b n-\left(\frac12,\frac12\right)\right)}=0,
\end{equation*}
yielding the claim.
\end{proof}

To express $F_0$ as the constant term of a multivariable Jacobi form (of index zero and weight 3) we require
\begin{equation*}
J(\b{z};\tau):=\frac{\eta(\tau)^5}{\eta(2 \tau)} \mathcal{T} (\boldsymbol{z};\tau)f(\b z;\tau).
\end{equation*}

Directly from Proposition \ref{prop-index} and the transformation behaviour of $\eta$, we obtain the following statement.
\begin{lemma}
The function $J$ is a Jacobi form of index zero and weight three. To be more precise, we have for $\left(\begin{smallmatrix}
a&b\\ c&d
\end{smallmatrix}\right) \in \Gamma_0(6)$, $\boldsymbol{m} \in 2 \Z^2$, and $\boldsymbol{\ell} \in \Z^2$ 
\begin{align*}
	J\left(\frac{\boldsymbol{z}}{c\tau +d}; \frac{a\tau +b}{c\tau +d}\right)&= \mu \left(\begin{matrix} a & b \\ c & d \end{matrix} \right)  \left(c\tau+d\right)^3 J(\boldsymbol{z};\tau),\\
	J(\boldsymbol{z}+\boldsymbol{m}\tau + \boldsymbol{\ell};\tau)&=J(\boldsymbol{z};\tau),
\end{align*}
where $\mu \left(\begin{smallmatrix} a & b \\ c & d \end{smallmatrix} \right):=\chi \left(\begin{smallmatrix}
	a&b\\c&d
	\end{smallmatrix}\right)^2 \chi \big(\begin{smallmatrix}
		a&2b\\ \frac c2 &d
	\end{smallmatrix} \big)^8$. 
\end{lemma}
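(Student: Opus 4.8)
The plan is to obtain both transformation laws for $J$ by simply multiplying together the already-established transformation properties of its three factors: the eta-quotient $\frac{\eta(\tau)^5}{\eta(2\tau)}$, the theta function $\mathcal{T}$, and the Jacobi form $f$. Since $J$ is defined precisely as this product, the strategy is entirely a bookkeeping exercise in which the key observation is that the index contributions and the automorphy exponentials cancel, because $\mathcal{T}$ has index $+\frac12\left(\begin{smallmatrix} 2& 1 \\ 1 & 2 \end{smallmatrix}\right)$ while $f$ has the opposite index $-\frac12\left(\begin{smallmatrix} 2& 1 \\ 1 & 2 \end{smallmatrix}\right)$.

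First I would treat the elliptic transformation, which is the cleaner of the two. From Proposition \ref{prop-index}, for $\boldsymbol{m}\in 2\Z^2$ and $\boldsymbol{\ell}\in\Z^2$ we have that $f$ picks up the factor $q^{\frac12 Q^*(\b{m})}\zeta_1^{m_1+\frac{m_2}{2}}\zeta_2^{m_2+\frac{m_1}{2}}$, whereas $\mathcal{T}$ picks up, via \eqref{Tel}, the factor $q^{-\frac12 Q^*(\b{m})}\zeta_1^{-m_1-\frac{m_2}{2}}\zeta_2^{-\frac{m_1}{2}-m_2}$. The eta-quotient is independent of $\boldsymbol{z}$, so it contributes nothing to the elliptic law. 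Multiplying these two factors, every $q$-power and every $\zeta_j$-power cancels exactly, giving $J(\boldsymbol{z}+\boldsymbol{m}\tau+\boldsymbol{\ell};\tau)=J(\boldsymbol{z};\tau)$. Here one must restrict to $\boldsymbol{m}\in 2\Z^2$ because that is the lattice on which $f$ has its stated elliptic behaviour; this explains the hypothesis $\boldsymbol{m}\in 2\Z^2$ in the lemma.

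Next I would handle the modular transformation under $\gamma=\left(\begin{smallmatrix} a&b\\ c&d\end{smallmatrix}\right)\in\Gamma_0(6)$. On this group both \eqref{Tmod} (valid for $\Gamma_0(3)\supset\Gamma_0(6)$) and the transformation of $f$ (valid for $\Gamma_0(2)\supset\Gamma_0(6)$) apply, which is why $\Gamma_0(6)$ is the correct level for the product. The function $\mathcal{T}$ contributes $\left(\frac{-3}{d}\right)(c\tau+d)e^{\frac{\pi i c}{c\tau+d}Q^*(\b z)}$ and $f$ contributes $\nu(\gamma)e^{-\frac{\pi i c}{c\tau+d}Q^*(\b z)}$, so the two $Q^*(\b z)$-exponentials cancel, leaving $J$ with index zero. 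The remaining automorphy factor from these two pieces is $\left(\frac{-3}{d}\right)(c\tau+d)\nu(\gamma)$, contributing the weight-one part. The eta-quotient $\frac{\eta(\tau)^5}{\eta(2\tau)}$ transforms under $\gamma\in\Gamma_0(2)$ with a factor $(c\tau+d)^2$ times a multiplier coming from $\chi(\gamma)^5\chi\left(\begin{smallmatrix} a&2b\\ \frac c2&d\end{smallmatrix}\right)^{-1}$, which supplies the missing weight-two part so that the total weight is $2+1=3$. Collecting the weight factors gives $(c\tau+d)^3$, and collecting all the multiplier pieces gives a single root-of-unity multiplier; one then verifies by direct simplification that this combined multiplier equals $\mu(\gamma)=\chi(\gamma)^2\chi\left(\begin{smallmatrix} a&2b\\ \frac c2&d\end{smallmatrix}\right)^{8}$ as stated.

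The main obstacle, such as it is, lies entirely in the last step: correctly combining the eta-multiplier $\chi(\gamma)^5\chi\left(\begin{smallmatrix} a&2b\\ \frac c2&d\end{smallmatrix}\right)^{-1}$ coming from $\frac{\eta(\tau)^5}{\eta(2\tau)}$ with the factor $\nu(\gamma)=\chi\left(\begin{smallmatrix} a&2b\\ \frac c2&d\end{smallmatrix}\right)^{9}\chi(\gamma)^{-9}$ from $f$ and the Jacobi symbol $\left(\frac{-3}{d}\right)$ from $\mathcal{T}$, and checking that their product collapses to $\mu(\gamma)=\chi(\gamma)^2\chi\left(\begin{smallmatrix} a&2b\\ \frac c2&d\end{smallmatrix}\right)^{8}$. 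The $\chi\left(\begin{smallmatrix} a&2b\\ \frac c2&d\end{smallmatrix}\right)$-powers add as $-1+9=8$ and the $\chi(\gamma)$-powers add as $5-9=-4$, so one must show that $\left(\frac{-3}{d}\right)\chi(\gamma)^{-4}=\chi(\gamma)^{2}$ on $\Gamma_0(6)$, i.e.\ that the Jacobi symbol $\left(\frac{-3}{d}\right)$ equals $\chi(\gamma)^{6}$ there. This is a standard but fiddly identity for the eta-multiplier on $\Gamma_0(6)$, and it is the only genuinely nontrivial verification; everything else is cancellation.
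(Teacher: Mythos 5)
Your overall strategy---multiplying together the three transformation laws from Proposition \ref{prop-index} and the standard behaviour of $\eta$---is exactly what the paper does (its proof is a one-line appeal to precisely these facts), and your treatment of the elliptic law, the cancellation of the $Q^*(\b z)$-exponentials, and the weight count $2+1+0=3$ are all correct.

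The problem is the one step you defer as ``standard but fiddly''. Your bookkeeping correctly yields the combined multiplier $\left(\frac{-3}{d}\right)\chi(\gamma)^{-4}\chi\left(\begin{smallmatrix}a&2b\\ \frac c2&d\end{smallmatrix}\right)^{8}$, and matching this to the stated $\mu(\gamma)=\chi(\gamma)^{2}\chi\left(\begin{smallmatrix}a&2b\\ \frac c2&d\end{smallmatrix}\right)^{8}$ would indeed require $\left(\frac{-3}{d}\right)=\chi(\gamma)^{6}$ on $\Gamma_0(6)$. But that identity is false: for $\gamma=\left(\begin{smallmatrix}1&1\\0&1\end{smallmatrix}\right)\in\Gamma_0(6)$ one has $\left(\frac{-3}{1}\right)=1$ while $\chi(\gamma)^{6}=e^{\pi i/2}=i$. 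A direct check of $J(\b z;\tau+1)$ confirms your intermediate computation rather than the displayed $\mu$: the eta-quotient contributes $e^{\pi i/4}$, $\mathcal{T}$ is invariant under $\tau\mapsto\tau+1$, and $f$ contributes $\nu\left(\begin{smallmatrix}1&1\\0&1\end{smallmatrix}\right)=e^{3\pi i/4}$, so $J(\b z;\tau+1)=-J(\b z;\tau)$, whereas $\mu\left(\begin{smallmatrix}1&1\\0&1\end{smallmatrix}\right)=-i$. So the verification you postpone cannot be carried out; the discrepancy points to an error in the paper's stated $\mu$ rather than in your computation, and the correct multiplier is the product $\left(\frac{-3}{d}\right)\chi(\gamma)^{-4}\chi\left(\begin{smallmatrix}a&2b\\ \frac c2&d\end{smallmatrix}\right)^{8}$ that you derived. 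Your proof should end by recording that multiplier (or by proving whatever simplification of it you claim), not by asserting a false identity to force agreement with $\mu$ as printed. Since nothing else in the paper uses the precise form of $\mu$, the issue does not propagate, but as written your argument contains a step that would fail.
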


The following proposition then indeed realizes $F_0$ as the constant term of $J$.

\begin{proposition} \label{index-zero} For $p=2$, we have 
$$
F_0(q) ={\rm CT}_{\left[\zeta_1,\, \zeta_2\right]} J(\b z;\tau).
$$
\end{proposition}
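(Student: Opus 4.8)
The plan is to identify the constant term ${\rm CT}_{[\zeta_1,\zeta_2]} J(\b z;\tau)$ with the explicit $q$-series formula for $F_0$ established in the preceding proposition. By definition $J = \frac{\eta(\tau)^5}{\eta(2\tau)} \mathcal{T}(\b z;\tau) f(\b z;\tau)$, and we already understand the two factors $\frac{\eta(\tau)^5}{\eta(2\tau)} f(\b z;\tau)$ and $\mathcal{T}(\b z;\tau)$ very well: the Fourier coefficients of the former are computed in Lemma \ref{JacobiExplicit} (equivalently, via Proposition \ref{ConjG}, they are the $\mathbb{G}$-functions that are themselves Fourier coefficients of $F$), while $\mathcal{T}(\b z;\tau) = \Theta_{A_2}(z_1+2z_2, z_1-z_2; 2\tau)$ is an explicit theta series. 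The strategy is therefore to expand the product into a double Fourier series and extract the $[\zeta_1^0, \zeta_2^0]$-coefficient by convolving the two expansions.

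The key steps, in order, would be the following. First, I would write $\mathcal{T}(\b z;\tau) = \sum_{\b m \in \Z^2} q^{2Q(\b m)} \zeta_1^{m_1 + m_2} \zeta_2^{2m_2 - m_1}$ explicitly, reading off the $\zeta$-exponents from the substitution in the definition of $\mathcal{T}$. Second, I would write the Fourier expansion of $\frac{\eta(\tau)^5}{\eta(2\tau)} f(\b z;\tau) = \sum_{\b r \in \Z^2} c_{\b r}(\tau)\, \zeta_1^{r_1}\zeta_2^{r_2}$, where $c_{\b r}(\tau)$ is precisely the quantity computed in Lemma \ref{JacobiExplicit} (and equals $q^{2Q(\b r)} \mathbb{G}_{\b r}(\tau)$ up to the scaling made explicit in Theorem \ref{TheoremCoeffJacobiExplicit} combined with Proposition \ref{ct}, so that $c_{\b r}(\tau) = {\rm coeff}_{[\zeta_1^{r_1},\zeta_2^{r_2}]} F$ after the appropriate index change). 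Third, extracting the constant term forces the convolution condition $m_1+m_2 + r_1 = 0$ and $2m_2 - m_1 + r_2 = 0$, i.e. $\b r = -(m_1+m_2, \, 2m_2 - m_1)$, leaving a single sum over $\b m \in \Z^2$ of $q^{2Q(\b m)} c_{-(m_1+m_2,\,2m_2-m_1)}(\tau)$. The final step is to check that this sum reproduces the closed-form expression $\frac14 \sum_{\b n}(12 n_1 n_2 - 3n_1^2 - 3n_2^2 - n_1 - n_2) q^{2Q(\b n - (\frac12,\frac12))}$ from the previous proposition.

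The main obstacle I expect is the bookkeeping in this last step: the coefficient $c_{\b r}(\tau)$ from Lemma \ref{JacobiExplicit} is itself a double sum with a sign factor $\varrho$ and a nontrivial quadratic exponent, so the constant-term convolution produces a triple sum that must collapse to the single theta-like sum with the harmonic polynomial coefficient $12 n_1 n_2 - 3n_1^2 - 3n_2^2 - n_1 - n_2$. I anticipate that the cleanest route is not to manipulate $c_{\b r}$ in the $\varrho$-form but rather to use the representation-theoretic shortcut: since $\mathbb{G}_{\b r}$ is the $\b r$-th Fourier coefficient of $F(\zeta_1,\zeta_2;q)$ (Proposition \ref{ct}), and $F_0 = \lim_{(\zeta_1,\zeta_2)\to(1,1)} F$, the sum $\sum_{\b r} c_{\b r}(\tau)$ against the theta weights $q^{2Q(\b m)}$ should be recognizable as encoding exactly the harmonic Gaussian that appears when one differentiates/evaluates $F$ at $(1,1)$. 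Concretely, I would try to match the quadratic form $2Q(\b m) + 2Q(\b r - (\tfrac12,\tfrac12))$ arising in the product against $2Q(\b n - (\tfrac12,\tfrac12))$ under a linear change of variables dictated by $\b r = -(m_1+m_2, 2m_2 - m_1)$, and verify that the induced polynomial weight is the claimed harmonic cubic; this reduces the whole proposition to a finite linear-algebra verification together with the identity from the preceding proposition, which I would invoke directly.
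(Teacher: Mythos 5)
Your overall strategy --- expand $\mathcal{T}$ and $\frac{\eta(\tau)^5}{\eta(2\tau)}f$ as Fourier series in $(\zeta_1,\zeta_2)$ and extract the constant term of the product by convolution --- is exactly the paper's proof, read in the opposite direction: the paper starts from $F_0=\lim_{(\zeta_1,\zeta_2)\to(1,1)}F=\sum_{\b n}\mathbb{G}_{\b n}$ (Proposition \ref{ct}), rewrites each $\mathbb{G}_{\b n}$ as $q^{2Q(\b n)}\frac{\eta(\tau)^5}{\eta(2\tau)}\,{\rm coeff}_{[\zeta_1^{2n_1-n_2},\,\zeta_2^{n_1+n_2}]}f$ via Proposition \ref{ConjG}, and then recognizes the remaining lattice sum $\sum_{\b n}q^{2Q(\b n)}\zeta_1^{-n_1-n_2}\zeta_2^{n_2-2n_1}$ as $\mathcal{T}(\b z;\tau)$. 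So the convolution identity you set up is the right skeleton.

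Where you go astray is the endgame. The ``concrete'' final step you describe --- matching a quadratic form $2Q(\b m)+2Q(\b r-(\tfrac12,\tfrac12))$ against $2Q(\b n-(\tfrac12,\tfrac12))$ and verifying the harmonic cubic by a finite linear-algebra check --- does not work as stated, because $c_{\b r}(\tau)$ is not a monomial $q^{2Q(\b r-(\frac12,\frac12))}$: by Lemma \ref{JacobiExplicit} it is a false-theta double sum with $\varrho$-signs (equivalently $q^{2Q(\cdot)}$ times a $\mathbb{G}$-function), so no single quadratic form arises in the product, and collapsing the resulting quadruple sum to $\frac14\sum_{\b n}(12n_1n_2-3n_1^2-3n_2^2-n_1-n_2)q^{2Q(\b n-(\frac12,\frac12))}$ would essentially amount to re-proving the identity of \cite[Example 4.3]{BM}. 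The correct and much easier completion is the shortcut you mention only in passing: by Theorem \ref{TheoremCoeffJacobiExplicit} one has $q^{2Q(\b r)}c_{(2r_1-r_2,\,r_1+r_2)}={\rm coeff}_{[\zeta_1^{r_1},\,\zeta_2^{r_2}]}F=\mathbb{G}_{\b r}$, and since $f(-\b z;\tau)=f(\b z;\tau)$, $f(z_2,z_1;\tau)=f(z_1,z_2;\tau)$ and $Q$ is symmetric, your convolution sum $\sum_{\b m}q^{2Q(\b m)}c_{-(m_1+m_2,\,2m_2-m_1)}$ reindexes to $\sum_{\b n}\mathbb{G}_{\b n}=F(1,1;q)=F_0$. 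The harmonic-polynomial proposition is not needed at all; committing to that resummation turns your outline into the paper's proof.
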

\begin{proof} By Proposition \ref{ct} and Proposition \ref{ConjG}, we have 
\begin{align*}
	F_0(q)&=\lim_{ (\zeta_1,\,\zeta_2) \to (1,1)}  F(\zeta_1,\zeta_2;q)=\lim_{ (\zeta_1,\,\zeta_2) \to (1,1)}  \sum_{\b n \in \mathbb{Z}^2}  \mathbb{G}_{\b n}(q) \zeta_1^{n_1} \zeta_2^{n_2}= \sum_{\b n \in \mathbb{Z}^2}  \mathbb{G}_{\b n}(q)\\
	&= \frac{\eta(\tau)^5}{\eta(2 \tau)} \sum_{\b n \in \mathbb{Z}^2} q^{2Q(\b n)} {\rm coeff}_{\left[\zeta_1^{n_1+n_2},\,\zeta_2^{2n_1-n_2}\right]}    f(\b z;\tau)\\
	&= \frac{\eta(\tau)^5}{\eta(2 \tau)} {\rm CT}_{\left[\zeta_1,\,\zeta_2\right]} f(\b z;\tau) \sum_{\b n \in \mathbb{Z}^2}   q^{2Q(\b n)} \zeta_1^{-n_1-n_2} \zeta_2^{n_2-2n_1} =  {\rm CT}_{\left[\zeta_1,\,\zeta_2\right]}J({\bf z};\tau).\qedhere
\end{align*}

\end{proof}

\end{document}